\documentclass[11pt,epsfig]{amsart}

\usepackage{amsmath,amssymb,amscd,amsthm,mathrsfs}
\usepackage[all]{xy}
\usepackage{enumerate}
\usepackage[dvips]{graphicx}
\usepackage{stmaryrd}

\voffset=5mm
\headheight=0mm
\headsep=10mm
\topmargin=-10mm
\oddsidemargin=10mm
\evensidemargin=10mm
\textheight=220mm
\textwidth=148mm

\theoremstyle{plain}
\numberwithin{equation}{section}
\newtheorem{theorem}{Theorem}[section]
\newtheorem{proposition}[theorem]{Proposition}
\newtheorem{corollary}[theorem]{Corollary}
\newtheorem{lemma}[theorem]{Lemma}

\theoremstyle{definition}
\newtheorem{definition}[theorem]{Definition}

\newtheorem{remark}[theorem]{Remark}
\renewcommand{\proofname}{\textit{Proof}}


\def\rank{\mathop{\mathrm{rank}}\nolimits}
\def\dim{\mathop{\mathrm{dim}}\nolimits}

\def\Hom{\mathop{\mathrm{Hom}}\nolimits}
\def\hom{\mathop{\mathrm{hom}}\nolimits}
\def\Ext{\mathop{\mathrm{Ext}}\nolimits}

\def\<{{\langle}}
\def\>{{\rangle}}

\def\Aut{\mathop{\mathrm{Aut}}\nolimits}

\def\+{\mathop{\oplus}\nolimits}


\newcommand{\mf}[1]{{\mathfrak{#1}}}

\newcommand{\bb}[1]{{\mathbb{#1}}}
\newcommand{\mca}[1]{{\mathcal{#1}}}
\newcommand{\mr}[1]{{\mathrm{#1}}}

\title{Pure sheaves and Kleinian singularities}
\author{Kotaro Kawatani}
\date{\today}
\address{Department of Mathematics, 
Osaka University, 
Machikaneyama-cho, Toyonaka-city, Osaka, Japan}
\email{kawatani@math.sci.osaka-u.ac.jp}

\subjclass[2010]{Primary 14B05; Secondly 14F05, 17B22}

\begin{document}
\maketitle

\begin{abstract}
Grothendieck proved that any locally free sheaf on a projective line over a field (uniquely) decomposes into a direct sum of line bundles. 
Ishii and Uehara construct  an analogue of Grothendieck's theorem for pure sheaves on the fundamental cycle of the Kleinian singularity $A_n$. 
We first study the analogue for the other Kleinian singularities. 
We also study the classification of rigid pure sheaves on the reduced scheme of the fundamental cycles. 
The classification is related to the classification of spherical objects in a certain Calabi-Yau $2$-dimensional category. 
\end{abstract}

\section{Introduction}\label{intro}

Let $\mca E$ be a locally free sheaf on a projective line $\bb P^1$ over a field $k$. 
As was proven by Grothendieck \cite{Gr}, the sheaf $\mca E$ decomposes into a direct sum of line bundles on $\bb P^1$ and the decomposition is unique up to isomorphisms.  
Hence we have a complete classification not only of locally free sheaves but also of indecomposable sheaves on $\bb P^1$.

It may be natural to study an analogue of Grothendieck's theorem for $\bb P^n$, but it seems difficult. 
In fact, if $n >1$ then there exists an indecomposable locally free sheaf on $\bb P^n$ whose rank is greater than $1$. 
A simple example of such a sheaf is the tangent sheaf on $\bb P^n$. 
Moreover the classification of indecomposable locally free sheaves is more difficult in the case of lower rank (cf. \cite{Har79}).

Though higher dimensional analogue of Grothendieck's theorem is difficult, Ishii and Uehara prove a beautiful analogue for the fundamental cycle $Z_{A_n}$ of the Kleinian singularity $A_n$. 
To recall their result, a non-zero sheaf $\mca F$ on a scheme $Y$ is said to be \textit{pure} if the support of any non-trivial subsheaf of $\mca F$ has the same dimension of $Y$. 
We note that if $Y$ is smooth and $1$-dimensional then a pure sheaf on $Y$ is equivalent to a locally free sheaf on $Y$. 
Thus a pure sheaf is a natural generalization of locally free sheaves for reducible schemes such as $Z_{A_n}$. 
Ishii and Uehara prove the following:

\begin{theorem}[{\cite[Lemma 6.1]{IU}}]\label{IUthm}
Let $\mca E$ be a pure sheaf on $Z_{A_n}$. 
Then $\mca E$ decomposes into a direct sum of invertible sheaves on connected subtrees of $Z_{A_n}$. 
Moreover, the decomposition is unique up to isomorphisms. 
\end{theorem}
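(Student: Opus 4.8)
The plan is to reduce the global statement to the local structure of pure sheaves at the nodes of $Z_{A_n}$ and then to reassemble the pieces using the fact that the dual graph of $Z_{A_n}$ is a tree. Write $Z_{A_n}=C_1\cup\cdots\cup C_n$, where each $C_i\cong\bb P^1$ and $C_i$ meets $C_{i+1}$ transversally at a single node $p_i$ (while $C_i\cap C_j=\emptyset$ for $|i-j|\ge 2$), so that $Z_{A_n}$ is a chain of rational curves and its connected subtrees are precisely the subchains $C_i\cup\cdots\cup C_j$. Since $Z_{A_n}$ is reduced and $1$-dimensional, purity of $\mca E$ means exactly that $\mca E$ has no subsheaf supported in dimension $0$; in particular $\mca E$ is torsion free, hence locally free on the smooth locus $Z_{A_n}\setminus\{p_1,\dots,p_{n-1}\}$.

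First I would analyse $\mca E$ formally at a node. The complete local ring at $p_i$ is $R\cong k[[x,y]]/(xy)$, and a torsion free $R$-module is maximal Cohen--Macaulay; by the classical classification of such modules over the node (the node being of finite Cohen--Macaulay type) every such module is a direct sum of copies of $R$, $R/(x)$ and $R/(y)$. Thus near $p_i$ the sheaf $\mca E$ splits into \emph{connecting} strands, on which it is locally free and which pass through the node, and \emph{broken} strands, which are supported on a single branch. Letting $\nu\colon\widetilde{Z}=\bigsqcup_i C_i\to Z_{A_n}$ be the normalisation and $\mca E_i:=\nu^*\mca E/(\mr{torsion})$ the induced locally free sheaf on $C_i\cong\bb P^1$, Grothendieck's theorem gives $\mca E_i\cong\bigoplus_k\mca O_{C_i}(a_{ik})$. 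The sheaf $\mca E$ is then recovered from the data $(\mca E_i)_i$ together with, at each node $p_i$, a partial identification of the fibres $(\mca E_i)_{p_i}$ and $(\mca E_{i+1})_{p_i}$ recording the connecting strands.

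The heart of the argument is to put all of this gluing data simultaneously into a normal form. On each component the choice of a Grothendieck decomposition $\mca E_i\cong\bigoplus_k\mca O_{C_i}(a_{ik})$ is not unique, and the induced fibres of the line-bundle summands at the two nodes $p_{i-1},p_i$ of an interior component $C_i$ can be adjusted using the automorphisms of $\mca E_i$; the key linear-algebra point is that a summand $\mca O_{C_i}(a)$ can be chosen so that its fibres at $p_{i-1}$ and $p_i$ are \emph{prescribed independently}, because evaluation of the global sections of $\mca O_{\bb P^1}(a)$, $a\ge 0$, at two distinct points is surjective. This freedom lets me either route a single summand through a node, matching it with a summand on the neighbouring component to build an invertible sheaf on a longer subchain, or leave it broken at the node, according to the connecting data. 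Processing the nodes in order along the chain, and using that the graph has no cycles so that the matching conditions never close up into an incompatible loop, I expect to decompose $\mca E$ as a direct sum of invertible sheaves supported on connected subchains $C_i\cup\cdots\cup C_j$. This simultaneous normalisation, and in particular showing that acyclicity really forces the splitting (for a \emph{cycle} of $\bb P^1$'s there would be additional indecomposables carrying nontrivial monodromy), is the step I expect to be the main obstacle.

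Finally I would prove uniqueness via the Krull--Schmidt property. The category of coherent sheaves on the projective scheme $Z_{A_n}$ over $k$ has finite-dimensional $\Hom$-spaces, hence is a Krull--Schmidt category, so it suffices to identify the indecomposables. If $\mca L$ is an invertible sheaf on a connected subchain $D=C_i\cup\cdots\cup C_j$, then, $D$ being reduced, connected and proper with $H^0(D,\mca O_D)=k$, one has $\mr{End}(\mca L)\cong H^0(D,\mca O_D)=k$; pushing forward along the closed immersion $D\hookrightarrow Z_{A_n}$ preserves this, so such a sheaf has a local endomorphism ring and is indecomposable. Combined with the decomposition produced above, this shows that the invertible sheaves on connected subtrees are exactly the indecomposable pure sheaves, and Krull--Schmidt then yields uniqueness of the decomposition up to isomorphism.
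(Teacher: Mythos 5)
First, a point of comparison: the paper does not prove Theorem \ref{IUthm} at all; it is imported from Ishii--Uehara \cite[Lemma 6.1]{IU}, whose proof proceeds by induction on the number of components of the chain, splitting off a pure sheaf on an end component and analysing the resulting extension classes with the help of an ordering on line bundles on subchains (the same ordering this paper reuses in Definition \ref{Order} and Lemma \ref{minimal}). Your plan is a genuinely different route --- classify torsion-free modules over the node $k[[x,y]]/(xy)$, normalize and apply Grothendieck on each component, then put the gluing data at the nodes into normal form, with uniqueness from Krull--Schmidt. This is the string-and-band method for nodal curves, and your local analysis at the nodes and your uniqueness paragraph (Hom-finiteness gives Krull--Schmidt; $\mr{End}(\mca L)\cong H^0(D,\mca O_D)=k$ for a line bundle on a connected subchain) are correct.

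However, there is a genuine gap, and you flag it yourself: the entire content of the theorem sits in the ``simultaneous normalisation'' of the gluing data, and you only say you \emph{expect} it to work, explicitly calling it the main obstacle. Deferring that step is deferring the theorem --- once one knows the gluing data always splits into rank-one strands matched across the nodes, the decomposition is immediate and nothing else is hard. Worse, the one concrete claim you offer in its support is false as stated: evaluation $H^0\big(\bb P^1, \mca O(a)\big) \to k_p \oplus k_q$ at two distinct points is surjective only for $a \geq 1$; for $a=0$ it is the diagonal copy of $k$ in $k^2$. Moreover, the relevant quantity is not the degree $a$ of a summand but the \emph{difference} of degrees between summands: re-embeddings of a summand $\mca O(a)$ inside $\mca E_i$ are governed by $\Hom(\mca O(a), \mca O(b)) \cong H^0(\mca O(b-a))$ for $b \geq a$, so two summands of equal degree can only be adjusted by one and the same scalar at both nodes of $C_i$. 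That equal-degree case is exactly where the difficulty lives: it is what produces the band-type indecomposables (with Jordan-block monodromy) on a \emph{cycle} of rational curves, so any correct proof that acyclicity forces splitting must handle precisely the case your surjectivity claim excludes, e.g.\ by an explicit simultaneous matrix reduction along the chain --- which is what Ishii--Uehara's inductive extension argument accomplishes in a different guise.
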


We first study an analogue of Theorem \ref{IUthm}.

\begin{theorem}[=Corollary \ref{maincor}]\label{main1}
Let $Z$ be the fundamental cycle of a Kleinian singularity except for $A_n$. 
Then 
\[
\max \{ \rank_Z \mca E  \mid  \mca E\mbox{ is an indecomposable pure sheaf on }Z \} = \infty. 
\]
\end{theorem}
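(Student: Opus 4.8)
The plan is to localize the problem at a single non-reduced component. For a Kleinian singularity the fundamental cycle $Z=\sum_i m_iE_i$ has multiplicities $m_i$ equal to the marks of the corresponding Dynkin diagram, and for every type other than $A_n$ some mark satisfies $m_i\ge 2$. Fix such a component $C:=E_i\cong\bb{P}^1$; it is a $(-2)$-curve on the minimal resolution, and $2C\le Z$ as effective divisors, so the ribbon $R:=2C$ is a closed subscheme $\iota\colon R\hookrightarrow Z$. Because $\iota$ is a closed immersion of one-dimensional schemes, $\iota_*$ is exact, fully faithful, and carries submodules to submodules (every $\mca{O}_Z$-submodule of $\iota_*\mca{F}$ is killed by the ideal of $R$, hence comes from an $\mca{O}_R$-submodule of $\mca{F}$). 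Consequently $\iota_*\mca{F}$ is pure on $Z$ whenever $\mca{F}$ is pure on $R$, and full faithfulness gives $\mathrm{End}_{\mca{O}_Z}(\iota_*\mca{F})=\mathrm{End}_{\mca{O}_R}(\mca{F})$, so indecomposability is preserved as well. It therefore suffices to exhibit indecomposable locally free $\mca{O}_R$-modules of unbounded rank.

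Second, I would use the classification of vector bundles on the ribbon $R$. Writing $\mca{N}:=\mca{I}_C/\mca{I}_C^2=\mca{O}_C(-C)\cong\mca{O}_{\bb{P}^1}(2)$ for the conormal bundle (using $C^2=-2$), every locally free $\mca{O}_R$-module $\mca{E}$ sits in $0\to\bar{\mca{E}}\otimes\mca{N}\to\mca{E}\to\bar{\mca{E}}\to 0$ with $\bar{\mca{E}}:=\mca{E}|_C$, and the locally free lifts of a fixed bundle $\bar{\mca{E}}$ on $\bb{P}^1$ form a torsor under $H^1(\bb{P}^1,\mca{E}nd(\bar{\mca{E}})\otimes\mca{N})$, the origin being the pullback $\bar{\mca{E}}\otimes_{\mca{O}_C}\mca{O}_R$. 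For $\bar{\mca{E}}=\bigoplus_{j=1}^k\mca{O}(a_j)$ with strictly decreasing degrees, the off-diagonal summands of this group are the $H^1(\bb{P}^1,\mca{O}(a_j-a_i+2))$, which are nonzero precisely when $a_i-a_j\ge 4$. Choosing $a_j=-4j$, so that consecutive degrees differ by exactly $4$, the group linking two adjacent summands is $H^1(\bb{P}^1,\mca{O}(-2))\cong k$; let $\mca{E}_k$ be the lift whose class is the single Jordan chain putting a generator between each pair of consecutive summands and $0$ elsewhere. Then $\mca{E}_k$ is locally free of rank $k$, so $\rank_Z\iota_*\mca{E}_k\to\infty$ as $k\to\infty$.

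Purity of $\mca{E}_k$ is immediate: a locally free $\mca{O}_R$-module has no subsheaf with $0$-dimensional support, so it is pure of dimension one, and the same then holds for $\iota_*\mca{E}_k$ on $Z$ by the first step. The crux is indecomposability. Here I would compute $\mathrm{End}_{\mca{O}_R}(\mca{E}_k)$, equivalently analyze the stabilizer of the Jordan class under $\mathrm{Aut}(\bar{\mca{E}})$: because the degrees $a_j$ strictly decrease, $\mathrm{Aut}(\bar{\mca{E}})$ is block lower-triangular, and commuting with a single Jordan chain should force an endomorphism to be scalar modulo nilpotents, so that $\mathrm{End}_{\mca{O}_R}(\mca{E}_k)$ is local and $\mca{E}_k$ is indecomposable. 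Verifying this endomorphism computation — that the interaction between the grading on $\mca{O}_{\bb{P}^1}$-homomorphisms and the square-zero relation reproduces exactly the local endomorphism ring of a single Jordan block, ruling out idempotents arising from the several line-bundle summands — is the step I expect to demand the most care.
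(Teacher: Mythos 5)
Your approach is genuinely different from the paper's, and it does lead to a proof, but as written it stops short at the decisive step. The paper never uses the non-reduced structure of $Z$: it works on the reduced $D_4$-configuration $C_1+C_2+C_3+C_4$ (a closed subscheme of every non-$A_n$ fundamental cycle), takes the line bundles $\mca L_n=\mca O_{C_1+C_2+C_3}(n,-n,0)$ on the $A_3$-chain, and proves that the universal extension of $\mca O_{C_4}$ by $\bigoplus_{n\in I}\mca L_n$ is indecomposable because every morphism $\mca L_n\to\mca L_m$ ($n\neq m$) factors through a sheaf supported on $C_1$, which is disjoint from $C_4$, so $\mr{End}(\bigoplus_n\mca L_n)$ acts diagonally on $\bigoplus_n\Ext^1(\mca O_{C_4},\mca L_n)$ (Theorem~\ref{mainthm1} and Lemma~\ref{ferox}). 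In other words, the paper exploits the branch vertex of the dual graph, while you exploit the other feature that distinguishes the non-$A_n$ types: a coefficient $\geq 2$ in the fundamental cycle, which hands you the ribbon $R=2C$ and the deformation theory of bundles on a square-zero thickening. Your reduction step (pushforward along the closed immersion $R\hookrightarrow Z$ preserves purity and, by full faithfulness, indecomposability) is exactly parallel to the paper's Corollary~\ref{maincor}, and your rank count is compatible with Definition~\ref{rank}: $c_1(\iota_*\mca E_k)=2k[C]$ and the coefficient of $C$ in $Z$ is $m\geq 2$, so $\rank_Z\iota_*\mca E_k=\lceil 2k/m\rceil\to\infty$. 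What your route buys is that the wildness is traced to a single non-reduced component, using only Grothendieck's theorem on $\bb P^1$ plus thickening theory; what the paper's buys is that everything stays on reduced curves with elementary $\Ext$ computations, and the same construction is reused later for the rigidity analysis.

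The genuine gap is the indecomposability of $\mca E_k$, which you flag but do not prove; this is the entire content of the theorem (it is exactly the point on which the paper spends Lemma~\ref{ferox}). It does go through, as follows. First, $R=2C$ is automatically a \emph{split} ribbon, since ribbons over $\bb P^1$ with conormal bundle $\mca O(2)$ are classified by $\Ext^1_{\bb P^1}(\Omega_{\bb P^1},\mca O(2))\cong H^1(\mca O_{\bb P^1}(4))=0$; this legitimizes your ``origin is the pullback'' statement and the lifting criterion below. Second, the kernel of the restriction map $r\colon \mr{End}_R(\mca E_k)\to\mr{End}_C(\overline{\mca E})$ is a square-zero ideal (if $\phi(\mca E_k)\subset\mca I\mca E_k$ and $\psi(\mca E_k)\subset\mca I\mca E_k$ then $\phi\psi=0$ since $\mca I^2=0$), so if the image of $r$ contains no nontrivial idempotent, neither does $\mr{End}_R(\mca E_k)$. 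Third, the image of $r$ lies in the commutant of the class $\epsilon$: a lift of $\overline\phi$ to $\mca E_k$ exists iff $\overline\phi^{\,*}[\mca E_k]=(\overline\phi\otimes 1)_*[\mca E_k]$, which by splitness and the injectivity of $\Ext^1_C\to\Ext^1_R$ (the paper's Lemma~\ref{extcomp}) is the condition $[\overline\phi,\epsilon]=0$ in $H^1(\mca End(\overline{\mca E})\otimes\mca N)$. Finally, compute the entry of $[\overline\phi,\epsilon]$ linking the $j$-th and $(j+1)$-st summands: because $\epsilon$ is supported on consecutive pairs and $\mr{End}(\overline{\mca E})$ is triangular, the only surviving terms are $(c_{j+1}-c_j)\,\epsilon_{j+1,j}$, where $c_i$ are the diagonal scalars of $\overline\phi$; since $\epsilon_{j+1,j}$ generates the one-dimensional space $H^1(\mca O(-2))$, vanishing forces $c_1=\cdots=c_k$, so every element of the image of $r$ is a scalar plus a strictly triangular (nilpotent) matrix, and the image is local. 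With this paragraph supplied your proof is complete; without it, the proposal asserts rather than proves the one point where the non-$A_n$ geometry does real work.
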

We remark that the usual rank of sheaves is not appropriate since our scheme is reducible. 
Thus we introduce more suitable ``rank" of sheaves in our setting (see Definition \ref{rank}). 
By using it the first half of Theorem \ref{IUthm} can be restated that the maximum of the rank of indecomposable pure sheaves is $1$. 
It may be natural to expect that the maximum of the rank of indecomposable pure sheaves are bounded for the other Kleinian singularities. 
Our theorem gives an counter-example of the expectation.

The second aim of this note is to study the classification of ``$\mca O_X$-rigid" pure sheaves on $Z$. 
The classification is related to the classification of spherical objects in a certain category (for the definition of spherical objects, see also \cite{Huy06} or \cite{ST}).

To explain the relation, 
let $ X $ be the minimal resolution of a Kleinian singularity. 
It is well-known that the fundamental cycle $Z$ of the singularity is the schematic fiber of the singularity by the resolution. 
Since $Z$ is a subscheme of $X$, we have a natural embedding $\iota \colon Z \to X$. 
We denote by $D_Z(X)$ the bounded derived category of coherent sheaves on $X$ supported on $Z$. 
A coherent sheaf $\mca E$ on $Z$ is said to be \textit{$\mca O_X$-rigid} if the push forward $\iota _* \mca E$ by $\iota $ is rigid, that is, $\Ext_X^1 (\iota _* \mca E, \iota _* \mca E) =0$.

Ishii and Uehara show that each cohomology (with respect to the standard $t$-structure) of spherical objects in $D_Z(X)$ is the push forward $\iota _* \mca E$ of a pure sheaf $\mca E $ on $Z$ which is $\mca O_X$-rigid. 
If the singularity is $A_n$, then the classification of $\mca O_X$-rigid sheaves is a direct consequence of Theorem \ref{IUthm}. 
By using the classification, Ishii and Uehara classify spherical objects in $D_Z(X)$ for the Kleinian singularity $A_n$ (the details are in \cite[Proposition 1.6]{IU}). 
One might hope to classify spherical objects for the other Kleinian singularities following Ihii and Uehara's approach, by the first classifying indecomposable pure $\mca O_X$-rigid sheaves. 
Theorem \ref{main1} is an evidence that this likely to be a rather difficult problem and we do not solve it in this paper. 
However we do prove the following result, which leaves hope that such a classification might be achieved in the future.

\begin{theorem}\label{mainthm2}
Let $\mca E$ be an indecomposable pure sheaf on the reduced scheme $Z_{r}$ of the fundamental cycle of a Kleinian singularity except for $A_n$. 
If $\mca E$ is $\mca O_X$-rigid, then $\rank_{Z_r} \mca E \leq 3$ and the inequality is best possible. 
\end{theorem}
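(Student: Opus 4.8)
The plan is to convert $\mca{O}_X$-rigidity into a statement about the root lattice of the singularity and then to localize the resulting constraint at the unique trivalent component of $Z_r$. First I would identify the numerical Grothendieck group of $D_Z(X)$, equipped with the Euler form $\chi$, with the corresponding ADE root lattice: the classes $[\mca{O}_{C_i}(-1)]$ form a basis, $\chi$ coincides with the Cartan form $(\,\cdot\,,\,\cdot\,)$ (diagonal $2$, value $-1$ on adjacent curves, and $0$ otherwise), and every point class $[\mca{O}_p]$ lies in the kernel of $\chi$ and so vanishes numerically. Consequently neither the degrees of $\mca{E}$ on the $C_i$ nor the gluing data at the nodes affect the class, and for a pure sheaf $\mca{E}$ with component ranks $r_i=\rank(\mca{E}|_{C_i})$ one obtains $[\iota_*\mca{E}]=\sum_i r_i\alpha_i=:v$, the rank vector read as a lattice element.

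Since the minimal resolution is crepant, $D_Z(X)$ is $2$-Calabi-Yau, so Serre duality gives $\ext^2(\iota_*\mca{E},\iota_*\mca{E})=\hom(\iota_*\mca{E},\iota_*\mca{E})$ and hence $\chi(v,v)=2\hom(\iota_*\mca{E},\iota_*\mca{E})-\ext^1(\iota_*\mca{E},\iota_*\mca{E})$. If $\mca{E}$ is $\mca{O}_X$-rigid this becomes $(v,v)=2\hom(\iota_*\mca{E},\iota_*\mca{E})$. The key reduction is the (standard, $K3$-type) fact that an indecomposable rigid object of a Hom-finite $2$-Calabi-Yau category is a brick, i.e. $\hom(\iota_*\mca{E},\iota_*\mca{E})=1$; granting this, $(v,v)=2$, so $v$ is a positive root and the $r_i$ are precisely the coefficients of a positive root. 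In particular $\rank_{Z_r}\mca{E}=\max_i r_i$ is bounded by the largest coefficient occurring among those positive roots that are actually realized by rigid indecomposable pure sheaves.

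The heart of the matter is to bound this coefficient by $3$, and here I would localize at the unique trivalent component $C_0$ of $Z_r$, noting that for every non-$A_n$ type the largest coefficient occurring among all positive roots is attained at $C_0$. The three legs meeting $C_0$ are each of type $A$, so applying Theorem \ref{IUthm} along them presents $\mca{E}|_{C_0}$ as a balanced bundle on $\bb{P}^1$ carrying three subspaces, one glued in from each leg. Indecomposability together with rigidity — equivalently, brick-ness of the associated module over the preprojective algebra of the diagram — is what forces $\rank(\mca{E}|_{C_0})\le 3$. I expect this local estimate to be the main obstacle: truncating to the $D_4$-subconfiguration at $C_0$ only yields $\le 2$ and destroys indecomposability, so one must genuinely use the relations propagating along the length-two and longer legs. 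The concrete content is that the highest roots of $E_7$ and $E_8$, whose coefficient at $C_0$ is $4$ and $6$, admit no brick and hence no rigid indecomposable pure sheaf, whereas those of $D_n$ and $E_6$ (coefficients $2$ and $3$) do.

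Finally, to see that $3$ is best possible I would write down an explicit example on an $E_6$-configuration realizing the highest root $v=(3;2,2,2;1,1,1)$, where $\rank(\mca{E}|_{C_0})=3$. A direct computation gives $(v,v)=2$; one then checks that the sheaf is an indecomposable brick (the three subspaces at $C_0$ and the leg data being in general position), whence $\ext^1(\iota_*\mca{E},\iota_*\mca{E})=2\hom-(v,v)=0$ automatically, producing an $\mca{O}_X$-rigid indecomposable pure sheaf with $\rank_{Z_r}\mca{E}=3$.
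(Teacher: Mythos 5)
Your reduction collapses at the step you call ``standard'': in a Hom-finite CY2 category an indecomposable rigid object need \emph{not} be a brick, and this paper's own construction refutes it. In Proposition \ref{bestD} the paper produces, on the reduced $D_4$-cycle (central curve $C_3$), the universal extension $\mca U$ of $\mca O_{C_4}$ by $\mca N_{41}\oplus\mca N_{32}\oplus\mca N_{23}$; this $\mca U$ is indecomposable and $\mca O_X$-rigid, but its numerical class is $v=2\alpha_1+2\alpha_2+3\alpha_3+\alpha_4$, which satisfies $\chi(v,v)=6$, not $2$. By the CY2 identity $\chi(v,v)=2\hom^0-\hom^1$ and rigidity, $\hom^0(\iota_*\mca U,\iota_*\mca U)=3$: an indecomposable rigid object that is not a brick and whose class is not a root. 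So $\mca O_X$-rigidity does not force the rank vector to be a positive root, and the entire root-theoretic bound you build on that claim is unavailable. (The fact you may have in mind --- indecomposable rigid modules over a \emph{hereditary} algebra are exceptional, hence bricks --- genuinely fails in the 2-Calabi-Yau setting.)

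The failure is not repairable by refining the root analysis, because your predicted answer is wrong for $D_n$: the highest root of $D_n$ has coefficient $2$ at the trivalent vertex, so your scheme would cap the rank at $2$ there and would locate the optimal example only in $E_6$; but the bound $3$ is already attained on the reduced $D_4$-configuration sitting inside $Z_r$ for every $D_n$, precisely by the non-root class above. The paper instead argues sheaf-theoretically: it filters $\mca E$ by pure sheaves on the $A$-type subtrees, applies Theorem \ref{IUthm} to each piece, endows the sets of summands with the partial order of Definition \ref{Order}, and shows (Lemma \ref{minimal}, Table \ref{Summand}) that the relevant poset has at most three minimal elements, so that the extension matrix can be normalized to have at most three nonzero entries per indecomposable block --- the ``3'' is combinatorial, not a root coefficient. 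Sharpness is then proved by the universal-extension example together with the CY2 rigidity criterion of Lemma \ref{rigidCY}, not by realizing a root. Any salvage of your numerical approach would require a substitute for the brick claim that controls $\hom^0$ of indecomposable rigid objects, which is exactly what the paper's poset argument accomplishes by non-numerical means.
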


The proof of Theorem \ref{mainthm2} will be postponed till the end of Section \ref{4}. 
The essential part is in the proof of Propositions \ref{mainD} and \ref{bestD}.

\subsection*{Acknowledgement}
The author thanks the referee for valuable comments which simplify the proof of Theorem \ref{mainthm1} and improve readability. 
He is supported by JSPS KAKENHI Grant Number JP 16H06337. 
\section{Notations and Conventions}

Throughout this note, our field $k$ is algebraically closed and Kleinian singularities are given by $\mr{Spec}\,  k\llbracket x,y,z \rrbracket/ f(x,y,z)$ where $f(x,y,z)$ is one of the following:
\begin{center}
\begin{tabular}{cll}
$A_n$	&	$x^2 + y ^2 +z^{n+1}$ & for $n \geq 1$\\
$D_n$	&	 $x^2 + y^2 z + z^{n-1}$ & for $n \geq 4$\\
$E_6$	&	 $x^2 + y^3 + z^4$\\
$E_7$	&	$x^2 + y^3 + yz ^3$\\
$E_8$	&	$x^2 + y^3 + z^5$. 
\end{tabular}
\end{center}

Let $Z$ be the fundamental cycle of the singularity $D_{n}$. 
The $i$-th irreducible component $C_i$ of $Z$ is denoted as in Figure \ref{tree}. 
Then it is well-known that $Z$ is $C_1 +C_2 + \sum_{i=3}^{n-1}2 C_i +C_{n}$. 
Similarly the $j$-th irreducible component $C_j$ of the fundamental cycle of the singularities $E_{6}$, $E_7$ or $E_8$ is denoted as in Figure \ref{treeE}.

\begin{figure}[htb]
\begin{minipage}{0.49\hsize}
\begin{center}
\includegraphics[clip, width=67mm]{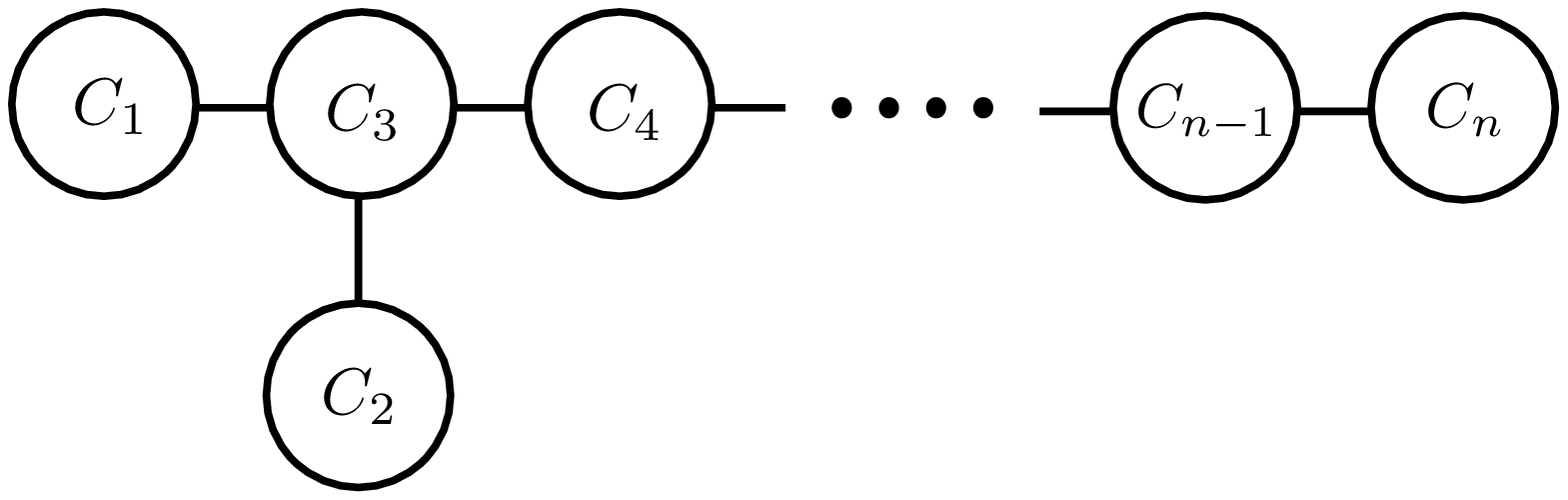}
\end{center}
\caption{}
\label{tree}
\end{minipage}
\begin{minipage}{0.49\hsize}
	\begin{center}
\includegraphics[clip, width=74mm]{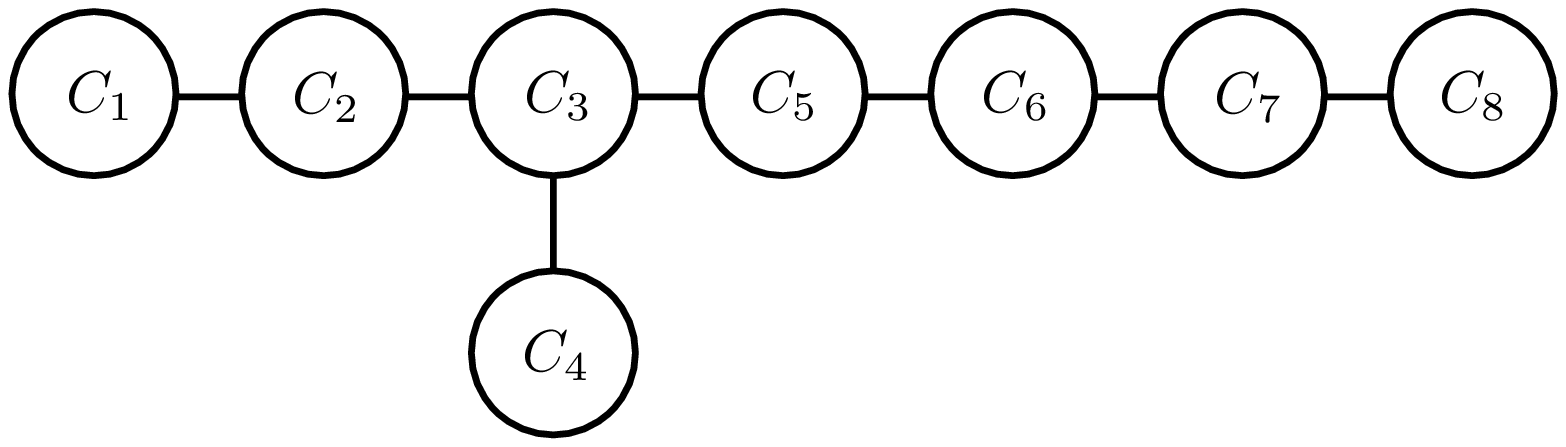}
	\end{center}
\caption{}
\label{treeE}
\end{minipage}
\end{figure}
\begin{remark}\label{change index}
We note that the chain $\sum_{i=2}^5 C_i$ in Figure \ref{treeE} gives the reduced scheme of the fundamental cycle of the singularity $D_4$. We use this identification in the proof of Proposition \ref{propE}. 
\end{remark}

Let $\mca D$ be a $k$-linear triangulated category. 
We denote by $\hom^p(E,F)$ the dimension of the vector space $\Hom^p_{\mca D}(E,F) =  \Hom_\mca D(E,F[p])$ for $E$ and $F \in \mca D$. 
The category $\mca D$ is said to be of finite type if the sum $\sum_{p \in \bb Z} \hom^p(E,F)$ is finite for any $E,F \in \mca D$. 
If $\mca D$ is of finite type 
then the Euler characteristic
\[
\chi(E,F)= \sum_{p \in \bb Z} (-1)^p \hom^p(E, F)
\] 
is well-defined. 
If the Serre functor of $\mca D$ is isomorphic to the double shift $[2]$, then $\mca D$ is said to be \textit{$2$-dimensional Calabi-Yau} (for simplicity CY2).  
If $\mca D$ is CY2, then we have $\hom^p(E,F) = \hom^{2-p}(F,E)$. 
One of the best example of CY2 categories is $D_Z(X)$.

\section{Indecomposable pure sheaves}\label{2}

\begin{definition}\label{rank}
Let $Z'$ a $1$-dimensional closed subscheme of the fundamental cycle $Z$ of a Klein singularity and $\iota' \colon Z' \to  X$ be the embedding to the minimal resolution $X$ of the singularity. 
We define the rank of a sheaf $\mca E$ on $Z'$ as follows: 
\[
\rank_{Z'} \mca E := \min \{ a \in \bb Z_{\geq 0} \mid c_1(\iota' _* \mca E) \leq a \cdot Z'    \}, 
\]
where $c_1$ is the first Chern class. 
\end{definition}

\begin{remark}
We would like to define a rank so that the structure sheaf of any $Z'$ has rank $1$. 
One of a naive generalization of the usual rank is the following: 
The rank of a sheaf $\mca E$ on $Z'$ is the maximum rank of $\mca E$ on each irreducible components of $Z'$. 
Such a generalization does not satisfy our requirement if $Z'$ is the fundamental cycle $Z$ of a Kleinian singularity except for $A_n$. 
\end{remark}

By using Definition \ref{rank} the first half of Theorem \ref{IUthm} can be restated as follows
\[
\max \{ \rank_{Z_{A_n}} \mca E  \mid \mca E\mbox{ is an indecomposable pure sheaf on } Z_{A_n} \} = 1. 
\] 
Contrary to the singularity $A_n$, we prove the following for the singularity $D_4$:

\begin{theorem}\label{mainthm1}
Let $\mca E$ be  a pure sheaf on the reduced scheme $Z_{r}$ of the fundamental cycle $Z$ of the singularity $D_4$. 
Then there exists an indecomposable pure sheaf $\mca E$ on $Z_{r}$ of $\rank_{Z_{r}} \mca E=r$ for any $r \in \bb N$. 
\end{theorem}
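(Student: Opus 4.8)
The plan is to produce, for each $r$, an indecomposable \emph{vector bundle} $\mca E_r$ on $Z_r$ of rank $r$; since a locally free sheaf on the reduced nodal curve $Z_r$ is automatically pure (any subsheaf is torsion-free, hence of full support) and a rank-$r$ vector bundle has $\rank_{Z_r}\mca E_r=r$ by Definition \ref{rank}, this suffices. Write $C_3$ for the central component and $p_1,p_2,p_3$ for the three nodes at which the legs $C_1,C_2,C_4$ are attached. A vector bundle on $Z_r$ is the same datum as bundles $E_i$ on the components $C_i\cong\bb{P}^1$ together with gluing isomorphisms $g_j$ of the fibers over each $p_j$; an endomorphism is a tuple $(\phi_i)$, $\phi_i\in\mathrm{End}_{C_i}(E_i)$, whose fiber values agree under the $g_j$. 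Thus $\mca E_r$ is indecomposable exactly when this endomorphism algebra is local, and the whole question becomes the linear-algebra problem of controlling how $\mathrm{End}(E_3)$, restricted to the three fibers over $p_1,p_2,p_3$, interacts with the three leg algebras.

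First I would treat $r=2$ as a prototype. Take $E_3=\mca{O}_{C_3}^{\oplus 2}$ and each leg bundle $E_i=\mca{O}(-1)\oplus\mca{O}$, with the $g_j$ chosen so that the three lines $\ell_j\subset k^2$ cut out by the $\mca{O}(-1)$-summands are pairwise distinct. Since $E_3$ is trivial, a central endomorphism is a constant matrix $\phi_3\in\mathrm{Mat}_2(k)$; the image of $\mathrm{End}(E_i)\to\mathrm{End}((E_i)_{p_j})$ is the stabilizer of $\ell_j$, because the off-diagonal $\Hom(\mca{O}(-1),\mca{O})=H^0(\mca{O}(1))$ evaluates \emph{onto} $k$ at $p_j$. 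Hence an endomorphism of $\mca E_2$ forces $\phi_3$ to preserve the three distinct lines $\ell_1,\ell_2,\ell_3$, so $\phi_3$ is scalar, and each leg then contributes only a square-zero nilpotent vanishing at its node. The endomorphism algebra is therefore $k\cdot\mathrm{id}$ extended by a square-zero radical, hence local, and $\mca E_2$ is indecomposable of rank $2$.

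For general $r$ the same mechanism must be fed more constraints: three subspaces (or flags) in $k^r$ no longer pin a constant matrix down to scalars once $r>3$, so the central bundle can no longer be trivial. The plan is to take $E_3$ a suitably spread-out sum $\bigoplus_k\mca{O}(d_k)$ on $C_3$ and legs of rank $r$ imposing flag conditions at $p_1,p_2,p_3$; now a central endomorphism is assembled from sections of the bundles $\mca{O}(d_k-d_l)$, whose values at the three nodes are linked by the global geometry of $\bb{P}^1$, and it is this extra rigidity---absent for a chain, which is precisely why the $A_n$ case stays bounded---that lets the three node conditions cut $\mathrm{End}(\mca E_r)$ down to a local algebra for every $r$. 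Conceptually this reflects the fact that three legs on a central $\bb{P}^1$ present the configuration as the tame domestic weighted projective line $\bb{X}(2,2,2)$, of type $\tilde{D}_4$, whose indecomposables are well known to have unbounded rank; alternatively one may try to build the $\mca E_r$ inductively as non-split extensions of $\mca E_{r-1}$ by a line bundle on a connected subtree. The main obstacle is exactly the endomorphism computation: verifying that, for the chosen splitting type on $C_3$ and gluing data, the constraints at the three nodes are in sufficiently general position to force every endomorphism into scalars-plus-nilpotents, so that $\mathrm{End}(\mca E_r)$ remains local as $r\to\infty$.
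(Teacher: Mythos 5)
Your reduction to locally free sheaves is legitimate (a vector bundle on the reduced curve $Z_r$ is automatically pure, and a rank $r$ bundle has $\rank_{Z_r}=r$ under Definition \ref{rank}), and your $r=2$ example is correct: with trivial central bundle and legs $\mca O(-1)\oplus\mca O$ glued along three pairwise distinct lines in $k^2$, any endomorphism restricts on $C_3$ to a constant matrix preserving three distinct lines, hence a scalar, so $\mr{End}(\mca E_2)$ is local. But the theorem claims indecomposables of \emph{every} rank $r$, and for $r\geq 3$ your text is a program, not an argument: you say the central bundle must be a ``suitably spread-out'' sum $\bigoplus_k\mca O(d_k)$ with leg-imposed flag conditions ``in sufficiently general position'', and you yourself flag the verification that $\mr{End}(\mca E_r)$ stays local for all $r$ as ``the main obstacle''. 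That verification is the entire content of the theorem, and nothing in the proposal performs it. The appeal to the weighted projective line $\bb X(2,2,2)$ is only an analogy --- sheaves on that orbifold and bundles on the nodal star are objects of different categories, and no functor transporting indecomposables is given --- and the fallback suggestion of building $\mca E_r$ as a non-split extension of $\mca E_{r-1}$ by a line bundle does not work as stated, since non-split extensions of indecomposables are very often decomposable.

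It is worth seeing how the paper gets around exactly this obstacle: it abandons locally free sheaves. For a finite set $I$ with $\# I=r$ it takes the line bundles $\mca L_n=\mca O_{C_1+C_2+C_3}(n,-n,0)$, $n\in I$, notes $\Ext^1_{Z_r}(\mca O_{C_4},\mca L_n)\cong k$, and defines $\mca U_I$ to be the universal extension of $\mca O_{C_4}$ by $\bigoplus_{n\in I}\mca L_n$ (all extension coordinates equal to $1$); this sheaf has $\rank_{Z_r}\mca U_I=r$ but is not locally free once $r\geq 2$, since its generic rank is $r$ on $C_1,C_2,C_3$ and $1$ on $C_4$. Indecomposability then follows from a single factorization argument (Lemma \ref{ferox}): for $i\neq j$ every morphism $\mca L_i\to\mca L_j$ factors through a line bundle supported on a single end component ($C_1$ or $C_2$, by symmetry), which is disjoint from $C_4$, hence induces the zero map $\Ext^1_{Z_r}(\mca O_{C_4},\mca L_i)\to\Ext^1_{Z_r}(\mca O_{C_4},\mca L_j)$. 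Consequently $\mr{End}\big(\bigoplus_{n\in I}\mca L_n\big)$ acts on $\bigoplus_{n\in I}\Ext^1_{Z_r}(\mca O_{C_4},\mca L_n)$ by diagonal matrices, and any direct sum decomposition of $\mca U_I$ would force some coordinate of the all-ones class to vanish, a contradiction. This lemma is precisely the ``general position'' statement your plan needs, but in a setting where it is a two-line computation valid uniformly in $r$, rather than an open-ended matrix problem; if you want to salvage the vector-bundle program, it is this kind of statement about your gluing data that you would have to prove for every $r$.
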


Before the proof we denote by $\mca O_{C_1+C_2+C_3}(a_1,a_2,a_3)$ an invertible sheaf on the chain $C_1 + C_2 +C_3$ such that the degree on each irreducible component $C_i$ is $a_i$.

\begin{remark}
A key ingredient of the proof of Theorem \ref{mainthm1} is a choice of particular sheaves $\mca L_n$, where $\mca L_n = \mca O_{C_1 + C_2 + C_3}(n,-n,0)$ for $n \in \bb Z$.  
Any pair $(\mca L_n, \mca L_m)$ (for $n \neq m$) has the following property: 
For any morphism $ f \colon \mca L_n \to \mca L_m$, the induces morphism $ f_* \colon \Ext^1_{Z_r} (\mca O_{C_4} , \mca L_n) \to \Ext^1_{Z_r}(\mca O_{C_4}, \mca L_m)$ is zero (the details are in Lemma \ref{ferox}). 
If the singularity is $A_n$, such a pair does not exist. 
\end{remark}

\begin{proof}
Take an invertible sheaf $\mca L_n = \mca O_{C_ 1 + C_2 + C_3}(n,-n,0)$ on $C_1+C_2+C_3$ for an integer $n \in \bb Z$. 
It is easy to see $\Ext_{Z_{r}}^1(\mca O_{C_4}, \mca L_n) \cong \Ext_{Z_{r}}^1 (\mca O_{C_4}, 
\mca O_{C_3}) $. 
We wish to describe $\Ext^1_{Z_{r}} (\mca O_{C_4}, \mca O_{C_3}) $ in an explicit way. 
By the locally free resolution of $\mca O_{C_4}$ as $\mca O_{Z_{r}}$-module, 
we see $\mca Hom^0_{Z_{r}} (\mca O_{C_4}, \mca O_{C_3} ) =0$ and  
$\mca Ext^1_{Z_{r}}(\mca O_{C_4}, \mca O_{C_3}) \cong k(x)$ where $x \in C_3 \cap C_4$. 
Thus we have $\Ext_{Z_{r}}^1(\mca O_{C_4}, \mca L_n) \cong \Ext^1_{Z_{r}} (\mca O_{C_4}, \mca O_{C_3})  \cong H^0\big(k(x)\big)
$ by the local-to-global spectral sequence.

Let $I$ be an arbitrary finite subset of $\bb Z$. 
The vector space $\bigoplus _{n \in I} \Ext_{Z_{r}}^1 (\mca O_{C_4}, \mca L_n)$ is denoted by $V_I$. 
Any extension class $[\mca E] \in V_I$ can be identified with a column vector with respect to a natural basis of $V_I$. 
Take the universal extension $[\mca U_I]$, that is, $[\mca U_I]$ is a vector whose components are all $1$. 
We wish to prove that $\mca U_{I} $ is indecomposable.

Suppose to the contrary that $\mca U_{I}$ decomposes into $\mca F \oplus \mca G$.
We can assume $\mr{Supp}\ \mca G \supset C_4$ without loss of generality. 
Then we have $\Hom_{Z_r}(\mca F, \mca O_{C_4})=0$ since $\mr{Supp}\ \mca F \subset C_1 + C_2 +C_3$. 
Hence the natural morphism $ f \colon \mca F \oplus \mca G \to \mca O_{C_4}$ splits into $0 \oplus \tilde f$ where $0$ is the zero morphism from $\mca F$ and $\tilde f \colon \mca G \to \mca O_{C_4}$. 

Let $K$ be the kernel of the morphism $\tilde f$. 
Then we have
\[
\mca F \oplus K \cong \bigoplus_{n \in I}\mca L_n. 
\]
and see that $\mca F \oplus K $ is a pure $\mca O_{C_1 +C _2 + C_3}$-module. 
Thus, by Theorem \ref{IUthm}, we see
$\mca F \cong \bigoplus _{n_i \in I'}\mca L_{n_i}$ and $K \cong \bigoplus_{n_j \in I''} \mca L_{n_j}$ where $I' \coprod I'' = I$. 
In particular we have the following diagram of distinguished triangles:
\[
\xymatrix{
\mca F \oplus \mca G	\ar[d]\ar[r]^f	&\mca O_{C_4}	\ar[d]\ar[r]^-{u} &\bigoplus_{n \in I} \mca L_n [1]\ar[d]^{\pi[1]} \\
\mca F		\ar[r]				&0				\ar[r]		&\bigoplus _{ n_i \in I'}\mca 
L_{n_i}[1].	\\
}
\]
Hence the composite $\pi[1] \circ u$ is zero in the derived category $D(Z_r)$ on $Z_r$. 
Thus the corresponding coefficients $[\mca U_{I}] \in \Ext^1_{Z_{r}}(\mca O_{C_4}, \mca F) 
\oplus \Ext^1_{Z_{r}}(\mca O_{C_4}, \mca G)$ to $\mca F$ should be $0$. 
Moreover we see that the natural representation of the automorphism group $\Aut (\bigoplus \mca L_n)$ on $V_I$ is contained in diagonal matrices by Lemma \ref{ferox} (below). 
This contradicts the definition of $\mca U_I$. 
\end{proof}

\begin{lemma}\label{ferox}
We denote by $\mca L_{i}$ a pure sheaf $\mca O_{C_1 +C_2 + C_3 } (i,-i,0)$ for an integer $i$. 
For any finite subset $I \subset \bb Z$, the vector space $\bigoplus _{i \in I} \Ext^1_{Z_{r}}(\mca O_{C_4}, \mca L_{i})$ is denoted by $V_I$. 
Then the image of a natural representation 
\[
\rho \colon \mr{End}_{Z_{r}} \big(\bigoplus _{i \in I} \mca L_i \big) \to \mr{End} _{k}(V_I )
\]
 is contained in diagonal matrices with respect to a natural basis of $V_I$. 
\end{lemma}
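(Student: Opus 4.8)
The plan is to reduce the statement to a single vanishing: that for $i \neq j$ and any morphism $f \colon \mca L_i \to \mca L_j$, the induced map $f_* \colon \Ext^1_{Z_r}(\mca O_{C_4}, \mca L_i) \to \Ext^1_{Z_r}(\mca O_{C_4}, \mca L_j)$ is zero. Indeed, writing an endomorphism $\phi$ of $\bigoplus_{i \in I}\mca L_i$ as a matrix $(\phi_{ji})$ with $\phi_{ji} \in \Hom_{Z_r}(\mca L_i, \mca L_j)$, the matrix $\rho(\phi)$ has $(j,i)$-block equal to $(\phi_{ji})_*$. The diagonal blocks come from $\mr{End}_{Z_r}(\mca L_i) = H^0(\mca O_{C_1+C_2+C_3}) = k$ and hence are scalars, so it suffices to kill every off-diagonal block, i.e. to prove the displayed vanishing for $i \neq j$.

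For the local structure I would use that $\Ext^1_{Z_r}(\mca O_{C_4}, \mca L_i)$ is concentrated at the node $x = C_3 \cap C_4$: since $\mca L_i$ restricted to a neighbourhood of $x$ is just $\mca O_{C_3}$, the sheaf $\mca Ext^1_{Z_r}(\mca O_{C_4}, \mca L_i)$ is the skyscraper $k(x)$ already computed in the proof of Theorem \ref{mainthm1}, and local-to-global gives $\Ext^1_{Z_r}(\mca O_{C_4}, \mca L_i) \cong k(x)$. Working in the local ring $\mca O_{Z_r, x} \cong k\llbracket u, v \rrbracket/(uv)$ with $C_3 = \{v=0\}$ and $C_4 = \{u=0\}$, the periodic free resolution of $\mca O_{C_4}$ identifies this $\Ext^1$ with $k[u]/(u)$. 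A morphism $f \colon \mca L_i \to \mca L_j$ is, near $x$, multiplication by a local function $g \in k[u]$, and its action on $k[u]/(u)$ is multiplication by $g(0) = f(x)$. Thus $f_*$ is scalar multiplication by the value of $f$ at $x$, equivalently by the scalar describing $f|_{C_3} \colon \mca O_{C_3} \to \mca O_{C_3}$.

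It then remains to show $f(x) = 0$ whenever $i \neq j$. Since $\mca L_i^\vee \otimes \mca L_j \cong \mca O_{C_1+C_2+C_3}(m, -m, 0)$ with $m = j - i \neq 0$, we have $\Hom_{Z_r}(\mca L_i, \mca L_j) \cong H^0\big(\mca O_{C_1+C_2+C_3}(m,-m,0)\big)$. On the chain $C_1 - C_3 - C_2$ a global section is a compatible triple: a section of $\mca O_{\bb P^1}(m)$ on $C_1$, a constant on $C_3$, and a section of $\mca O_{\bb P^1}(-m)$ on $C_2$. Whichever sign $m$ has, one of the two legs carries a line bundle of negative degree and so forces its section to vanish; gluing at the corresponding node then forces the constant section on $C_3$ to vanish at a point, hence to be identically zero. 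Therefore $f|_{C_3} = 0$, so $f(x) = 0$ and $f_* = 0$, which gives the off-diagonal vanishing and the lemma.

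The main obstacle I anticipate is the second paragraph: pinning down that the abstract functorial action of $f$ on $\Ext^1$ really is multiplication by the scalar $f(x)$, rather than merely arguing that it factors through something supported at $x$. I would settle this through the explicit periodic resolution at the node, which makes the action transparent. By contrast, the global-sections computation of the third paragraph is the conceptual point that singles out $D_4$: it is exactly the vanishing of the degree-$0$ section on the central component $C_3$ that makes $f_*$ vanish, and no such pair $(\mca L_i, \mca L_j)$ exists for $A_n$.
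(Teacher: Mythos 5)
Your proof is correct, and it shares the paper's skeleton: both reduce the lemma to the vanishing of the off-diagonal blocks, and both rest on the same global-sections fact, namely that for $m=j-i\neq 0$ every section of $\mca L_i^\vee\otimes\mca L_j\cong\mca O_{C_1+C_2+C_3}(m,-m,0)$ dies on one leg of negative degree and hence, by gluing at a node, vanishes identically on $C_3$. Where the two arguments genuinely differ is in how this fact is converted into $f_*=0$. The paper never touches the local algebra at the node: it observes that any morphism $\mca O_{C_1+C_2+C_3}\to\mca L_\ell$ with $\ell\neq 0$ factors through the subsheaf $\mca O_{C_1}(\ell-1)\hookrightarrow\mca L_\ell$ (or the analogous sheaf on $C_2$), which is supported on a component disjoint from $C_4$; since $\Ext^1_{Z_r}(\mca O_{C_4},\mca O_{C_1}(\ell-1))=0$, functoriality forces the induced map on $\Ext^1$ to factor through the zero space. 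You instead compute the action directly at $x=C_3\cap C_4$: using the periodic resolution over $k\llbracket u,v\rrbracket/(uv)$ you identify $f_*$ on the one-dimensional space $\Ext^1_{Z_r}(\mca O_{C_4},\mca L_i)\cong k(x)$ as multiplication by the value $f(x)$, which is zero because $f|_{C_3}=0$. The paper's finish is shorter and purely support-theoretic; it also sidesteps the question (which you rightly flag as the delicate point) of proving that the abstract functorial action is literally a scalar multiplication. Your finish is more computational but buys a sharper statement: the formula $f_*=f(x)\cdot\mathrm{id}$ characterizes exactly which morphisms act nontrivially on the extension space, which makes transparent both why the pairs $(\mca L_i,\mca L_j)$ have the required property and why no such pairs exist on a chain of type $A_n$, where morphisms between the relevant line bundles need not vanish on the component meeting the extending curve.
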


\begin{proof}
Let $\mca D$ be the derived category on $Z_r$. 
The vector space $\mr{End}_{Z_{r}} \big(\bigoplus _{i \in I} \mca L_i \big) $ decomposes into as follows:
\[
\mr{End}_{Z_{r}} \big(\bigoplus _{i \in I} \mca L_i \big) 
 \cong \bigoplus _{i,j \in I} \Hom_{Z_r} ( \mca O_{C_1+C_2+C_3}, \mca L_{j-i} ). 
\]
By the symmetry for $C_1$ and $C_2$ we can assume $ \ell  = i - j \geq 0$.

If $-\ell < 0$ then $H^0(\mca O_{C_2+ C_3}(-\ell,0))$ is zero. 
Hence the natural inclusion $\mca O_{C_1}(\ell-1) \to \mca L_{\ell}$ induces an isomorphism 
\begin{equation}
\Hom(\mca O_{C_1 + C_2 +C_3}, \mca L_{\ell}) 
 \cong  \Hom(\mca O_{C_1 + C_2 +C_3}, \mca O_{C_1}(\ell-1)) .  \label{hung}
\end{equation}
Thus any morphism $ \varphi \in \Hom(\mca O_{C_1 + C_2 +C_3}, \mca L_{\ell}) $ factors through $\mca O_{C_1}(\ell-1)$ and 
hence the induced morphism in $\mca D$
\[
\varphi _* \colon \Hom_{\mca D}^0(\mca O_{C_4}[-1], \mca O_{C_1 +C_2 + C_3}) \to \Hom_{\mca D}^0(\mca O_{C_4}[-1], \mca L_{\ell})
\]
 factors through $\mca O_{C_1}(\ell-1)$. 
Thus the morphism $\varphi_* $ should be zero since the intersection $C_1 \cap C_4$ is empty. 
Hence the action of $\mr{End}_{Z_{r}} (\bigoplus_{i \in I} \mca L_i)$ is contained in diagonal component of $\mr{End}_{k}(V_I)$. 
\end{proof}

\begin{corollary}\label{maincor}
Let $Z$ be the fundamental cycle of a Kleinian singularity except for $A_n$. 
Then there is an indecomposable pure sheaf on $Z$ of rank $r$ for any $r \in \bb N$. 
In particular the following holds:
\[
\max \{ \rank_Z \mca E \mid \mca E \mbox{ is an indecomposable pure sheaf on }Z  \} = \infty. 
\]
\end{corollary}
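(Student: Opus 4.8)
The plan is to localize the $D_4$ construction of Theorem \ref{mainthm1} at a trivalent vertex of the dual graph. For a Kleinian singularity other than $A_n$ the reduced fundamental cycle $Z_r$ is a tree of $\bb P^1$'s whose dual graph is the associated Dynkin diagram, and each such diagram --- $D_n$ (with $n\geq 4$), $E_6$, $E_7$, $E_8$, in contrast to the linear $A_n$ --- has a vertex of valency three. Write $C_3$ for this central component, choose two of its three neighbors as $C_1,C_2$ and the remaining one as $C_4$; then $\{C_1,C_2,C_3,C_4\}$ forms a $D_4$-subconfiguration (for the exceptional cases this is the one recorded in Remark \ref{change index}), the curve $C_1+C_2+C_3$ is an $A_3$-chain with $C_3$ in the middle, and $C_1\cap C_4=C_2\cap C_4=\varnothing$ because the configuration is a tree.

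First I would set $\mca L_n=\mca O_{C_1+C_2+C_3}(n,-n,0)$ and form the universal extension $\mca U_I$ of $\mca O_{C_4}$ by $\bigoplus_{n\in I}\mca L_n$, exactly as in the proof of Theorem \ref{mainthm1}. The point is that that proof is purely local around the chosen $D_4$-subconfiguration: the identification $\Ext^1_{Z_r}(\mca O_{C_4},\mca L_n)\cong\Ext^1_{Z_r}(\mca O_{C_4},\mca O_{C_3})\cong k$ uses only that $\mca L_n$ has degree $0$ on $C_3$ near the node $C_3\cap C_4$; the decomposition step applies Theorem \ref{IUthm} to the $A_3$-chain $C_1+C_2+C_3$; and Lemma \ref{ferox} uses only that chain structure together with $C_1\cap C_4=\varnothing$. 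None of these ingredients is disturbed by the remaining components of $Z_r$, so the same argument produces an indecomposable pure sheaf $\mca U_I$ supported on $C_1+C_2+C_3+C_4$. Viewing $\mca U_I$ as a sheaf on the full fundamental cycle $Z$ through the closed immersion $Z_r\hookrightarrow Z$ changes nothing essential: purity persists because the support is one-dimensional with no embedded points, and indecomposability persists because $\Hom_{\mca O_Z}(\mca U_I,\mca U_I)=\Hom_{\mca O_{Z_r}}(\mca U_I,\mca U_I)$.

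It then remains to compute the rank and to check that every value is attained. From the defining extension, the additivity of $c_1$ under the exact functor $\iota_*$, and the fact that each $\mca L_n$ is a line bundle on the reduced chain, one obtains $c_1(\iota_*\mca U_I)=|I|\,(C_1+C_2+C_3)+C_4$. Writing $Z=\sum_i m_iC_i$ for the fundamental cycle, Definition \ref{rank} gives
\[
\rank_Z\mca U_I=\max\Bigl(\Bigl\lceil\tfrac{|I|}{m_1}\Bigr\rceil,\Bigl\lceil\tfrac{|I|}{m_2}\Bigr\rceil,\Bigl\lceil\tfrac{|I|}{m_3}\Bigr\rceil,\,1\Bigr)=\Bigl\lceil\tfrac{|I|}{m^{*}}\Bigr\rceil,\qquad m^{*}=\min(m_1,m_2,m_3).
\]
Since $m^{*}$ is a fixed positive integer and $\lceil |I|/m^{*}\rceil$ runs through all of $\bb N$ as $|I|$ does, for every $r\in\bb N$ a suitable finite $I\subset\bb Z$ yields an indecomposable pure sheaf on $Z$ of rank exactly $r$; in particular the maximal rank is infinite, which is the assertion of the corollary.

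The step I expect to be the crux is not this rank bookkeeping but the careful transfer of the indecomposability argument to a $D_4$-subconfiguration sitting inside a larger tree. Concretely, one must verify that in any splitting $\mca U_I\cong\mca F\oplus\mca G$ exactly one summand is generically supported on $C_4$ --- so that the other is a pure sheaf on the $A_3$-chain and Theorem \ref{IUthm} may be invoked --- and that the extra components of $Z_r$ contribute nothing to the relevant $\Hom$ and $\Ext$ groups. Both facts follow from the generic rank of $\mca U_I$ along $C_4$ being $1$ and from the locality of the extension class at the single node $C_3\cap C_4$, but it is in formulating these cleanly in full generality that the actual work resides.
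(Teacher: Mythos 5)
Your proposal is correct and follows essentially the same route as the paper: the paper likewise reduces to the $D_4$ construction of Theorem \ref{mainthm1}, realizing the reduced $D_4$ cycle $Z_{4,r}$ as a closed subscheme of $Z$ (your trivalent-vertex observation) and pushing $\mca U_I$ forward along the closed embedding $\iota \colon Z_{4,r} \to Z$, with purity and indecomposability preserved because $\iota_*$ is fully faithful and its image is closed under direct summands --- the same mechanism as your identification of endomorphism rings. Your explicit rank bookkeeping with the multiplicities $m_i$ is a point the paper leaves implicit, but it is not a different argument.
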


\begin{proof}
Let $Z_{4,r}$ be the reduced scheme of the fundamental cycle of the singularity $D_4$. 
Then $Z_{4,r}$ is a closed subscheme of $Z$.

As in the proof of Theorem \ref{mainthm1}, the universal extension $\mca U_I$ of $\Ext^1_{Z_{4,r}} (\mca O_{C_4}, \bigoplus_{n \in I} \mca L_n)$ is indecomposable pure sheaves on $Z_{4,r}$. 
The push forward $\iota _* \mca U$ by the closed embedding $\iota \colon Z_{4,r} \to Z$ is also a pure sheaf on $Z$. 
Moreover the push forward $\iota _*$ is a fully faithful functor from $\mr{Coh} (Z_{4,r})$ to $\mr{Coh}(Z)$ and the full subcategory $\iota_*\mr{Coh} (Z_{4,r})$ is closed under direct summands. 
Thus the assertion holds. 
\end{proof}

\section{$\mca O_X$-rigid pure sheaves on $D_n$}\label{3}

For any closed embedding $f \colon Z \to X$, the push forward $f_* \colon \mr{Coh}(Z) \to \mr{Coh}(X)$ is fully faithful, but the derived push forward $f_* \colon D(Z) \to D(X)$ is not. 
To analyze the difference the following lemma is necessary.

\begin{lemma}\label{extcomp}
Let $f \colon Z \to X$ be a closed embedding of $Z$ to an algebraic variety $X$. 
Let $\mca F$ and $\mca E$ be sheaves on $Z$.  
The canonical map 
$\Ext^1_Z(\mca F , \mca E) \rightarrow\Ext^1_X(f_* \mca F, f_* \mca E)$ is injective. 
\end{lemma}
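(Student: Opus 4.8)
The plan is to describe the canonical map explicitly on Yoneda extensions and then deduce injectivity from the full faithfulness of the \emph{underived} pushforward, which is recalled just before the lemma. Since $f$ is a closed embedding, $f_*$ is a finite (hence affine) morphism and the functor $f_* \colon \mr{Coh}(Z) \to \mr{Coh}(X)$ is exact. Consequently a short exact sequence $0 \to \mca E \to \mca G \xrightarrow{p} \mca F \to 0$ representing a class in $\Ext^1_Z(\mca F , \mca E)$ is carried to a short exact sequence $0 \to f_*\mca E \to f_*\mca G \xrightarrow{f_*p} f_*\mca F \to 0$, and this assignment on Yoneda classes is precisely the canonical map $\Ext^1_Z(\mca F , \mca E) \to \Ext^1_X(f_*\mca F , f_*\mca E)$.

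To prove injectivity it suffices to show the kernel is trivial, that is, if the pushed-forward extension splits in $\mr{Coh}(X)$ then the original extension splits in $\mr{Coh}(Z)$. So suppose $s \colon f_*\mca F \to f_*\mca G$ is a splitting of $f_*p$, i.e. $(f_*p)\circ s = \mr{id}_{f_*\mca F}$. Because $f_*$ is fully faithful on coherent sheaves, $s = f_*\tilde s$ for a unique morphism $\tilde s \colon \mca F \to \mca G$ in $\mr{Coh}(Z)$. Then $f_*(p \circ \tilde s) = (f_*p)\circ s = \mr{id}_{f_*\mca F} = f_*(\mr{id}_{\mca F})$, so by faithfulness of $f_*$ we get $p \circ \tilde s = \mr{id}_{\mca F}$. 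Hence $\tilde s$ splits the original sequence and the class in $\Ext^1_Z(\mca F , \mca E)$ vanishes.

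The argument has no serious obstacle; its whole content is the interplay of two properties of $f_*$ for a closed immersion, exactness (so the Yoneda pushforward of extensions is well defined) and full faithfulness on the abelian categories (so that a splitting upstairs descends). The only point worth flagging is why injectivity does \emph{not} upgrade to an isomorphism or extend to higher $\Ext$: the vanishing of an extension class is detected purely by $\Hom$, where full faithfulness applies, whereas surjectivity onto $\Ext^1_X$ and the behaviour of $\Ext^{\geq 2}$ involve the derived pushforward, which—as remarked before the lemma—fails to be fully faithful.
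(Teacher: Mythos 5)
Your proof is correct, but it takes a genuinely different route from the paper's. You work at the level of Yoneda extensions: since a closed immersion is affine, $f_*$ is exact, so it carries short exact sequences to short exact sequences, and the kernel of the resulting additive map on $\Ext^1$ is detected by splittings, which descend along the fully faithful underived pushforward $f_*\colon \mr{Coh}(Z)\to\mr{Coh}(X)$ (the fact recalled just before the lemma). The paper instead argues in the derived category: using the adjunction $\Ext^1_X(f_*\mca F, f_*\mca E)\cong \Hom^1_Z(\bb Lf^*f_*\mca F,\mca E)$, it forms the triangle $\mf F\to \bb Lf^*f_*\mca F\to \mca F\to \mf F[1]$, observes that $\mf F$ has cohomology only in negative degrees (so $\Hom^{q}_Z(\mf F,\mca E)=0$ for $q\leq 0$), and reads off injectivity of the comparison map $\kappa$ from the long exact sequence. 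Your argument is more elementary and in fact more general (it applies to any exact, fully faithful functor between abelian categories), and it cleanly isolates why only injectivity, not surjectivity, is available. What the paper's heavier setup buys is reusability: the same triangle and exact sequence identify the obstruction to surjectivity as $\Hom^1_Z(\mf F,\mca E)$, and this is exactly what Corollary \ref{isom} exploits, computing $\mf F\cong \mca O_{C_i}(D-Z)[1]$ to show the map is an isomorphism for $\mca O_{C_i}(d_i)$, $\mca O_{C_j}(d_j)$ with $i\neq j$. So if one adopted your proof of the lemma, the computation of the cone $\mf F$ would still have to be done separately for the corollary. One small point worth making explicit in your write-up: the identification of the Yoneda pushforward with the paper's canonical map $\kappa$, and the fact that pushing forward extensions is additive for Baer sums (both standard, both following from exactness of $f_*$), are what license the reduction of injectivity to the triviality of the kernel.
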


\begin{proof}
By the adjunction we have $\Ext^p_X(f_* \mca F, f_* \mca E) \cong \Ext^p_Z\big(\bb L f^*  (f_* \mca F), \mca E
\big)$ (note that $f$ is affine morphism). 
By the canonical morphism 
$\bb L f^* f_* \mca F \to \mca F$ we have the following distinguished triangle in the derived category $\mca D$ of coherent sheaves on  $Z$:
\[
\begin{CD}
\mf F @>>>	\bb Lf^* f_* \mca F @>>> \mca F @>>> \mf F[1]. 
\end{CD}
\]
Since $\bb L^0 f^* f_* \mca F  = f^* f_*\mca F\cong \mca F$, we see that the $p$-th cohomology (with respect to the standard $t$-structure) of the complex $\mf F $ vanishes for $p\in \bb Z_{\geq 0}$. 
Hence we have $\Hom_Z^q(\mf F, \mca E)=0$ for $q \in \bb Z_{\leq 0}$.

By taking $\bb R \Hom_{\mca D} (-, \mca E)$ to the above sequence we have the following exact sequence:
\[
\begin{CD}
\Hom^0_{\mca D}(\mf F, \mca E)	@>>>	\Hom^1_{\mca D}(\mca F, \mca E)	@>\kappa>>	\Hom^1_{\mca D}(\bb Lf^*f_* \mca F, \mca 
E )	@>>> \Hom^1_{\mca D}(\mf F, \mca E)
\end{CD}. 
\]
Note that the canonical morphism $\Ext^1_Z(\mca F, \mca E) \to \Ext^1_X(f_* \mca F, f_* \mca E)$ is given by $\kappa$. 
Since $\Hom^0_Z(\mf F, \mca E)=0$ the morphism $\kappa$ is injective. 
\end{proof}

\begin{corollary}\label{isom}
Let $Z$ be a chain of rational curves in $X$ and let $\{ C_i \}_{i=1}^{n}$ be a set of irreducible components of $Z$. 
Then for $i \neq j$, we have
\[
\Ext^1_Z(\mca O_{C_i}(d_i), \mca O_{C_j}(d_j)) \cong \Ext^1_X(f_* \mca O_{C_i}(d_i),  f_* \mca O_{C_j}(d_j)). 
\]
\end{corollary}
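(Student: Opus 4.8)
The plan is to upgrade the injection of Lemma~\ref{extcomp} to an isomorphism by establishing surjectivity of the canonical map. Set $\mca F = \mca O_{C_i}(d_i)$ and $\mca E = \mca O_{C_j}(d_j)$, and keep the notation of the proof of Lemma~\ref{extcomp}, so that $\mf F$ sits in the distinguished triangle $\mf F \to \bb Lf^* f_* \mca F \to \mca F \to \mf F[1]$ and the canonical map $\kappa \colon \Ext^1_Z(\mca F, \mca E) \to \Ext^1_X(f_* \mca F, f_* \mca E)$ is identified, via the adjunction $\Ext^1_X(f_*\mca F, f_*\mca E) \cong \Hom^1_{\mca D}(\bb Lf^* f_* \mca F, \mca E)$, with a connecting map in the long exact sequence obtained by applying $\bb R\Hom_{\mca D}(-, \mca E)$. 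Extending that sequence one term to the right, the cokernel of $\kappa$ injects into $\Hom^1_{\mca D}(\mf F, \mca E)$; hence it suffices to prove $\Hom^1_Z(\mf F, \mca E) = 0$.

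First I would pin down $\mf F$. Since $X$ is the smooth minimal resolution and $Z$ is a chain of rational curves, $Z$ is an effective Cartier divisor on $X$, so it is a local complete intersection of codimension one and its conormal sheaf $N^\vee = \mca O_X(-Z)|_Z$ is an invertible $\mca O_Z$-module. Resolving $\mca O_Z$ by the two-term Koszul complex $\mca O_X(-Z) \to \mca O_X$ and tensoring with $f_* \mca F$ shows that $\bb Lf^* f_* \mca F$ has cohomology only in degrees $0$ and $-1$, with $\mca H^0 \cong \mca F$ and $\mca H^{-1} \cong \mca F \otimes_{\mca O_Z} N^\vee$. Comparing with the triangle above, $\mf F$ is concentrated in a single degree, whence $\mf F \cong (\mca F \otimes N^\vee)[1]$. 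In particular $\mf F$ is, up to shift, a sheaf supported on $C_i$, since $f_* \mca F$ is supported on $C_i \subset X$ and tensoring by the line bundle $N^\vee$ preserves supports.

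It then remains to compute $\Hom^1_{\mca D}(\mf F, \mca E) \cong \Hom_Z(\mca F \otimes N^\vee, \mca E)$. Here $\mca F \otimes N^\vee$ is supported on $C_i$, while $\mca E = \mca O_{C_j}(d_j)$ is a pure (torsion-free) sheaf on the reduced component $C_j$. For $i \neq j$ the intersection $C_i \cap C_j$ is at most a single node, so the image of any homomorphism $\mca F \otimes N^\vee \to \mca E$ is simultaneously a quotient of a sheaf supported on $C_i$ and a subsheaf of $\mca E$, hence supported on $C_i \cap C_j$, i.e. on a finite set of points. Since $\mca E$ is pure of dimension one it has no nonzero subsheaf supported on points, so this image vanishes and the homomorphism is zero. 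Therefore $\Hom_Z(\mca F \otimes N^\vee, \mca E) = 0$, so $\kappa$ is surjective; combined with Lemma~\ref{extcomp} it is an isomorphism.

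The main obstacle is the middle step: controlling $\bb Lf^* f_* \mca F$ precisely enough to conclude that $\mf F$ is a shifted sheaf supported on $C_i$. This rests on the fact that the chain $Z$ is a codimension-one local complete intersection in the smooth surface $X$, so that only $L^0$ and $L^1$ survive and the latter is the expected conormal twist; once this is in hand, the purity argument for the one-dimensional sheaf $\mca E$ is routine.
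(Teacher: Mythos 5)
Your proof is correct and follows essentially the same route as the paper: both arguments reduce, via the triangle in Lemma \ref{extcomp}, to showing $\Hom^1_Z(\mf F, \mca O_{C_j}(d_j))=0$, identify $\mf F$ as a shifted line bundle on $C_i$ (your $(\mca F \otimes N^\vee)[1]$ coincides with the paper's $\mca O_{C_i}(D-Z)[1]$), and conclude from the fact that a line bundle on $C_i$ admits no nonzero map to a pure sheaf on $C_j$ for $i\neq j$. The only (cosmetic) difference is that you compute $\bb L f^* f_* \mca F$ by resolving $\mca O_Z$ and observing that the Koszul differential annihilates $f_*\mca F$, whereas the paper resolves $f_*\mca F$ itself and identifies the kernel after restriction to $Z$ --- the same Koszul computation packaged slightly differently.
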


\begin{proof}
A locally free resolution of $f_*\mca O_{C_i}(d_i)$ is given by 
\[
\begin{CD}
  0	@>>>	\mca O_X(D-C_i) @>>> \mca O_X(D) @>>> 0
\end{CD},  
\]
where $D$ is a divisor on $X$ such that $D.C_i =d_i$. 
Thus $\bb Lf^* f_* \mca O_{C_i}(d_i)$ is given by the following:
\[
\begin{CD}
  0	@>>>	\mca O_Z(D-C_i) @>\delta>> \mca O_Z(D) @>>> 0
\end{CD}. 
\]
In particular $\bb L^{-1} f^{*}f_* \mca O_{C_i}(d_i)$ is the kernel of $\delta$ which is isomorphic to 
$\mca O_{C_i}(D-Z)$. 
Moreover $\mf F$ is isomorphic to $\mca O_{C_i}(D- Z)[1]$. 
Since $C_i \neq C_j$ we have 
\[
\Hom^1_Z(\mf F, \mca O_{C_j}(d_j)) = \Hom^0_Z(\mca O_{C_i}(D- Z), \mca O_{C_j}(d_j)) =0. 
\]
This is the desired conclusion. 
\end{proof}

We first introduce a relation on a collection of sheaves and secondly prove that the relation defines an order:

\begin{definition}\label{Order}
Let $\{ \mca N_i \}_{i \in I}$ and $\{ \mca L_j \}_{j \in J}$ be finite collections of isomorphism classes of sheaves on a scheme $Y$. 
Suppose that $\mf N$ and $\mf L$ satisfy the following:
\begin{enumerate}
\item[$(a)$] Endomorphism rings $\mr{End}_Y(\mca L_j)$ and $\mr{End}_Y(\mca N_i)$ are generated by the identity for each $i$ and $j$. 
\item[$(b)$] Each pair $(\mca L_j, \mca N_i)$ satisfies $ \dim \Ext^1_Y(\mca L_j, \mca N_i)=1$. 
\end{enumerate}

(1) If there is a morphism $f\colon \mca L_{j_2} \to \mca L_{j_1}$ such that $f^*\colon \Ext^1_Z(\mca L_{j_1}, \mca N_i) \to \Ext^1_Z(\mca L_{j_2}, \mca N_i)$ is nonzero for all $i \in I$ then we define a relation $\mca L_{j _1} \leq  \mca L_{j_2} $ on $\{ \mca L_j  \}_{j \in J}$. 

(2) Dually if there is a morphism $g\colon \mca N_{i_1} \to \mca N_{i_2}$ such that the induced morphism $g_* \colon \Ext^1_Z(\mca L_j, \mca N_{i_1}) \to \Ext^1_Z(\mca L_j, \mca N_{i_2}) $ is nonzero for all $ j \in J$ then we define a relation $\mca N_{i_1} \leq  \mca N_{i_2}$ on $\{ \mca N_i  \}_{i \in I}$. 
\end{definition}

%
%
%
%
%

\begin{proposition}\label{Order2}
The relations on $\{ \mca N_i \}_{i \in I}$ and $\{ \mca L_j \}_{j \in J}$ in Definition \ref{Order} respectively define orders. 
In particular both are posets. 
\end{proposition}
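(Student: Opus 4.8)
The plan is to verify the three axioms of a partial order—reflexivity, transitivity, and antisymmetry—for the relation on $\{\mca L_j\}_{j \in J}$ given in Definition \ref{Order}(1); the relation on $\{\mca N_i\}_{i \in I}$ then follows by the dual argument, reversing all arrows and replacing the pullbacks $f^*$ by the pushforwards $g_*$. The two hypotheses carry the argument: condition $(b)$ makes each $\Ext^1_Z(\mca L_j, \mca N_i)$ one-dimensional, so that a linear map between any two of these spaces is nonzero if and only if it is an isomorphism, while condition $(a)$ identifies $\mr{End}_Z(\mca L_j)$ with $k \cdot \mr{id}$. Reflexivity is immediate: taking $f = \mr{id}_{\mca L_j}$ induces the identity on the nonzero space $\Ext^1_Z(\mca L_j, \mca N_i)$ for every $i$, so $\mca L_j \leq \mca L_j$. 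For transitivity, given witnesses $f \colon \mca L_{j_2} \to \mca L_{j_1}$ for $\mca L_{j_1} \leq \mca L_{j_2}$ and $g \colon \mca L_{j_3} \to \mca L_{j_2}$ for $\mca L_{j_2} \leq \mca L_{j_3}$, I form $f \circ g \colon \mca L_{j_3} \to \mca L_{j_1}$; contravariance gives $(f \circ g)^* = g^* \circ f^*$, and since each $f^*$ and $g^*$ is a nonzero map of one-dimensional spaces, hence an isomorphism, the composite is an isomorphism and in particular nonzero for all $i$. Thus $f \circ g$ witnesses $\mca L_{j_1} \leq \mca L_{j_3}$.

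The substantive step is antisymmetry. Suppose $\mca L_{j_1} \leq \mca L_{j_2}$ and $\mca L_{j_2} \leq \mca L_{j_1}$, with witnesses $f \colon \mca L_{j_2} \to \mca L_{j_1}$ and $g \colon \mca L_{j_1} \to \mca L_{j_2}$. By condition $(a)$ the endomorphism $f \circ g$ of $\mca L_{j_1}$ equals $\lambda \cdot \mr{id}$ for some $\lambda \in k$. On the other hand, the map it induces on $\Ext^1_Z(\mca L_{j_1}, \mca N_i)$ is $g^* \circ f^*$, a composite of isomorphisms (again because the spaces are one-dimensional and $f^*, g^*$ are nonzero), hence itself an isomorphism; but this induced map is also multiplication by $\lambda$, so $\lambda \neq 0$. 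Symmetrically, $g \circ f = \mu \cdot \mr{id}$ with $\mu \neq 0$. Consequently $\mu^{-1} g$ is a left inverse and $\lambda^{-1} g$ is a right inverse of $f$, so $f$ is an isomorphism $\mca L_{j_2} \cong \mca L_{j_1}$. Since the $\mca L_j$ are taken as isomorphism classes, $\mca L_{j_1}$ and $\mca L_{j_2}$ are the same element, which is exactly antisymmetry.

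I expect the only delicate point to be this last step, where both hypotheses must be used at once: $(a)$ to realize the composite $f \circ g$ as a single scalar and $(b)$ to certify that this scalar is nonzero through the induced isomorphism on the one-dimensional $\Ext^1$. Everything else reduces to the elementary principle that in a one-dimensional vector space the notions ``nonzero'' and ``invertible'' coincide, which also makes the verification of the dual relation on $\{\mca N_i\}_{i \in I}$ entirely formal once the arrows are reversed.
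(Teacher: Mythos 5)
Your proposal is correct and follows essentially the same route as the paper: reflexivity via the identity, transitivity by composing witnesses and using that nonzero maps between the one-dimensional $\Ext^1$ spaces (condition $(b)$) are isomorphisms, and antisymmetry by showing the two composites are nonzero elements of the scalar endomorphism rings (condition $(a)$), hence that the witnesses are isomorphisms. The only difference is cosmetic — you make explicit the nonvanishing of the scalars $\lambda,\mu$ and the construction of inverses, which the paper leaves implicit.
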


\begin{proof}
Since the proof is similar, it is enough to show the claim for $\{ \mca {L}_j \}_{j \in J}$. 
The reflexivity is obvious since the identity gives the identity on $\Ext^1_{Z_r} (\mca L_j, \mca N_i)$.  

Suppose $\mca L_{j_1} \leq \mca L_{j_2}$ and $\mca L_{j_2} \leq \mca L_{j_1}$. 
Then there exist morphisms $f_1: \mca L_{j_2} \to \mca L_{j_1}$ and $f_2: \mca L_{j_1} \to \mca L_{j_2}$. 
By the condition $(b)$ in Definition \ref{Order}, both $f_1^*$ and $f_2^*$ are isomorphisms. 
In particular the compositions $(f_1 \circ f_2)^*$ and $(f_2 \circ f_1)^*$ are nonzero morphisms. 
Thus two morphisms $f_1 \circ f_2 \in \mr{End}(\mca L_{j_1})$ and $f_2 \circ f_1 \in \mr{End}(\mca L_{j_2})$ are not zero. 
By the condition $(a)$ in Definition \ref{Order}, we see that $f_1 \circ f_2$ and $f_2 \circ f_1$ are identities up to scalar and hence $\mca L_{j_1} \cong \mca L_{j_2}$. 

For the transitivity let us suppose $\mca L_{j_1} \leq \mca L_{j_2}$ and $\mca L_{j_2} \leq \mca L_{j_3}$. 
Similarly as above the composition $f_1 \circ f_2$ of two morphisms $f_1: \mca L_{j_2} \to \mca L_{j_1} $ and $f_2 : \mca L_{j_3} \to \mca L_{j_2}$ induces a non-zero morphism $(f_1 \circ f_2 )_* \colon \Ext^1_Z (\mca L_{j_3}, \mca N_i) \to \Ext^1_Z(\mca L_{j_1}, \mca N_i)$. 
Thus we obtain $\mca L_{j_1 } \leq \mca L_{j_3}$
\end{proof}

\begin{remark}
We are interested in the classification of $\mca O_X$-rigid pure sheaves and study the classification in Lemma \ref{minimal} and Proposition \ref{propE}. 
Any pure sheaf $\mca E$ on $Z$ of a Kleinian singularity is given by a successive extension of pure sheaves on subtrees (see the filtration (\ref{horoyoi}) below). 
The poset structure defined in Proposition \ref{Order2} is convenient to analyze the successive extension. 
%
%
%
%
%
\end{remark}

We are ready to prove our main proposition in this section.

\begin{proposition}\label{mainD}
Let $X$ be the minimal resolution of the singularity $D_n$ and $Z_r$ be the reduced scheme of the fundamental cycle $Z$ of $D_n$. 
Suppose that $\mca E$ is an indecomposable pure sheaf on $Z_r$. 
If $\mca E$ is $\mca O_X$-rigid then we have $\rank_{Z_r} \mca E \leq 3$. 
\end{proposition}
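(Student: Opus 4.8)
The plan is to combine the structure theory for pure sheaves on chains (Theorem \ref{IUthm}) with the poset machinery of Proposition \ref{Order2}, exploiting the special shape of the $D_n$ tree. Recall from Figure \ref{tree} that $Z_r$ is a tree whose ``body'' is the chain $C_3 + C_4 + \dots + C_{n-1}$, with two ``legs'' $C_1, C_2$ attached at $C_3$ and one ``leg'' $C_n$ attached at $C_{n-1}$. First I would record the basic filtration \begin{equation}\label{horoyoi} 0 \to \mca E_{\mr{chain}} \to \mca E \to \mca E_{\mr{legs}} \to 0, \end{equation} in which I restrict $\mca E$ to the subtrees and use that any pure sheaf is built from invertible sheaves on connected subtrees. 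By Theorem \ref{IUthm} (applied to the maximal chain subschemes), the restriction of $\mca E$ to the body chain decomposes as a direct sum of line bundles $\mca L_j = \mca O(\dots)$, and similarly the leg contributions are governed by the rank-one sheaves $\mca O_{C_i}(d_i)$.

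Next I would translate $\mca O_X$-rigidity into a combinatorial nonvanishing condition. Using Corollary \ref{isom}, the groups $\Ext^1_X(\iota_* \mca O_{C_i}(d_i), \iota_* \mca O_{C_j}(d_j))$ controlling rigidity agree with the intrinsic $\Ext^1_{Z_r}$ groups for $i \neq j$, and by Lemma \ref{extcomp} the intrinsic $\Ext^1$ always injects into the ambient one. The effect is that the obstruction to decomposing $\mca E$ lives in $\Ext^1_{Z_r}$ between the chain summands $\{\mca L_j\}$ and the leg summands $\{\mca N_i\}$ (the sheaves supported on $C_1, C_2, C_n$), each such group being one-dimensional where it is nonzero (this is exactly the setup verified in the proof of Theorem \ref{mainthm1}). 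Rigidity of $\iota_* \mca E$ then forces a compatibility: the extension class of $\mca E$ must have no ``splittable'' direction, which via Proposition \ref{Order2} endows $\{\mca L_j\}$ and $\{\mca N_i\}$ with the partial orders of Definition \ref{Order}.

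The core of the argument is then a bound on the size of these posets. I would argue that indecomposability of $\mca E$ forces the defining extension class to be ``generic'' (all components nonzero), and that $\mca O_X$-rigidity simultaneously forbids too many independent extension directions — otherwise one could exhibit a genuine $\Ext^1_X$ self-extension, contradicting rigidity. Concretely, the three legs $C_1, C_2, C_n$ of the $D_n$ diagram give at most three mutually incomparable ``directions'' in which line bundles on the body chain can be glued, and the poset on the $\{\mca L_j\}$ has height bounded by how these legs interact at the two attaching vertices $C_3$ and $C_{n-1}$. A rank-$r$ indecomposable would require a chain of length $r$ in the poset, and I would show each such strict inequality $\mca L_{j_1} < \mca L_{j_2}$ consumes one of the finitely many leg-attachment slots; since there are exactly three legs, the maximal chain has length at most three, yielding $\rank_{Z_r} \mca E \leq 3$. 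I expect the main obstacle to be the last step: carefully converting ``indecomposable and rigid'' into the precise statement that a length-$\geq 4$ chain in the poset produces either a decomposition (contradicting indecomposability) or a nonzero self-$\Ext^1$ in $D_Z(X)$ (contradicting rigidity). This will require a delicate bookkeeping of which $\Ext^1$ components are forced to vanish by the order relations, analogous to the vanishing argument via the empty intersection $C_1 \cap C_4$ used in Lemma \ref{ferox}, but now tracking all three legs at once.
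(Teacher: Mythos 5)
Your setup (splitting $Z_r$ along the trivalent vertex, decomposing the pieces via Theorem \ref{IUthm}, and organizing the extension classes into a matrix governed by the poset of Definition \ref{Order}) is the right framework, and it is close to the paper's, which splits $Z_r$ into the fork $C_1+C_2+C_3$ and the chain $C_4+\cdots+C_n$. But your core combinatorial step is backwards, and this is a genuine gap. You claim that a rank-$r$ indecomposable requires a \emph{chain} of length $r$ in the poset and that rigidity bounds chain lengths by the number of legs. In fact it is \emph{comparable} summands that allow splitting: if $\mca N_{i_1}\leq \mca N_{i_2}$, the corresponding entry of the extension matrix can be killed by a row transformation, so indecomposability forces the summands glued to a given piece to be pairwise \emph{incomparable}. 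High rank therefore comes from large \emph{antichains}, not long chains; totally ordered collections produce only rank-one indecomposables (that is exactly the $A_n$ case of Theorem \ref{IUthm}), and the posets here do contain long chains harmlessly. Your ``each strict inequality consumes a leg slot'' heuristic has no content in this framework; moreover the three relevant incomparable directions all sit at the fork vertex $C_3$ (supports of type $C_1+C_3$, $C_2+C_3$, $C_1+C_2+C_3$, cf. Proposition \ref{bestD}), while the leg $C_n$ enters only through the total ordering of the chain-part summands, so counting ``three legs $C_1,C_2,C_n$'' is not the right accounting.

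The second, related gap is that you never identify the mechanism that actually bounds antichains by $3$. Rigidity of $\iota_*\mca E$ must first be propagated to the pieces: since $\Hom_{Z_r}(\mca F,\mca G)=0$, the pushforwards of $\mca F$ and $\mca G$ are rigid (\cite[Lemma 2.5]{BB13}), and the same descent applied to a filtration of $\mca F$ forces each graded piece to have the form $\mca O_{C_i}(a_i)^{\oplus m_i}\oplus\mca O_{C_i}(a_i+1)^{\oplus n_i}$, i.e.\ degrees confined to two adjacent values. This confines every fork summand to the $18$ sheaves of Table \ref{Summand}, a finite poset covered by three totally ordered columns, whence any antichain (in particular the set of minimal elements of any subposet) has at most $3$ elements; that is where the bound $\rank_{Z_r}\mca E\leq 3$ comes from. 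Without this degree constraint the conclusion is simply false at the level of pure sheaves: the sheaves $\mca L_n=\mca O_{C_1+C_2+C_3}(n,-n,0)$ of Theorem \ref{mainthm1} are pairwise incomparable (Lemma \ref{ferox}) and yield indecomposables of arbitrary rank. Your appeal to ``rigidity forbids too many independent extension directions'' gestures at this but never produces the vanishing $\Hom$ conditions, the descent of rigidity to subquotients, or the finite table that make the bound provable.
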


\begin{proof}
We have $Z_{r} =\sum_{i=1}^{n} C_i $ by the definition. 
Take a pure sheaf $\mca E$ on $Z_r$ which is not necessary indecomposable but $\mca O_X$-rigid. 
We shall show that the rank of each direct summand of $\mca E$ is at most $3$.

Let $\mca F$ be the kernel of the restriction $\mca E \to \mca E \otimes \mca O_{C_4+ \cdots +C_n}$. 
By taking saturation if necessary, we can assume that the sheaf $\mca E$ fits into the short exact sequence 
\[
\begin{CD}
0 @>>> \mca F @>>> \mca E @>>> \mca G @>>> 0
\end{CD}
\]
where $\mca F$ is a pure sheaf on $C_1 + C_ 2 + C_3$ and $\mca G$ is a pure sheaf on $C_4+\cdots + C_n$. 
Both $\mca F$ and $\mca G$ are respectively direct sums of invertible sheaves of subtrees by Theorem \ref{IUthm} since both trees $C_1 +C_2 + C_3$ and $C_4 + \cdots  +C_n$ is the fundamental cycle of respectively $A_3$ and $A_{n-3}$:
\[
\mca F = \bigoplus_{i \in I} \mca N_{i } \mbox{ and }\mca G = \bigoplus _{j \in J} \mca L_j. 
\]
Without loss of generality we can assume the following
\begin{itemize}
\item The support of each $\mca N_i$ contains $C_3$. 
\item The support of each $\mca L_j$ contains $C_4$ and is connected. 
\end{itemize}

Now we claim the following:

%

\begin{lemma}\label{minimal}
Let $\mf N$ be the collection of direct summands $\{ \mca N_{i} \}_{i \in I}$ of $\mca F$ and $\mf L$ the collection of direct summands $ \{ \mca L_{j}  \} _{j \in J}$ of $\mca G$. 
\begin{enumerate}
\item[$(1)$] Both $\mf N$ and $ \mf L$ are posets with respect to the relation in Definition \ref{Order}.  
\item[$(2)$] There exist at most $3$ minimal elements in any subposet of $\mf N$ and the poset $\mf L$ is totally ordered. 
\end{enumerate}
\end{lemma}

Before the proof of Lemma \ref{minimal}, we finish the proof.  
Note that $\mca E$ defines a class $[\mca E ]$ in $\bigoplus_{i \in I, j \in J} \Ext^1_{Z_{r}}(\mca L_j, \mca N_i)$ denoted by $V_{IJ}$. 
Put $m=\# I$ and $n = \# J$. 
Clearly $V_{IJ}$ can be identified with the set of $m\times n$ matrices and we can write $[\mca E]$ by a matrix 
\[
[\mca E] =\begin{pmatrix} 
e_{11} & e_{12} & \cdots & e_{1n} \\
e_{21} & e_{22} & \cdots & e_{2n} \\
\vdots & \vdots		&		& \vdots \\
e_{m1} & e_{m2} & \cdots & e_{mn} 
\end{pmatrix}. 
\]

If $\mca N_{i_1} \leq  \mca N_{i_2}$ and $e_{i_1, j} \neq 0$ then 
we can assume $e_{i_2,j}=0$ by row fundamental transformations induced by a morphism $\mca N_{ i_1} \to \mca N_{i_2}$. 
Since $\mf {N}$ has at most $3$ minimal elements by Lemma \ref{minimal}, there exists at most $3$ components $e_{ij}$ such that $e_{ij}=1$ in each row. 

Similarly if $\mca L _{j_1} \geq  \mca L_{j_2}$ and $e_{i, j_1} \neq 0$
then we can assume $e_{i, j_2}=0$ by column fundamental transformations. 
Since $\mf L$ is totally ordered by Lemma \ref{minimal}, there exists at most $1$ components $e_{i'j'}$ such that $e_{i'j'}=1$ in each column. 
This means that the rank of each direct summand of $\mca E$ is at most $3$ since $Z_r$ is reduced. 
\end{proof}

\renewcommand{\proofname}{\textit{Proof} of Lemma \ref{minimal}}
\begin{proof}
Let $\iota \colon  Z_{r}  \to X$ be the embedding to the minimal resolution of the singularity. 
Since $\Hom_{Z_r}( \mca F, \mca G)$ is zero the push forwards $\iota _* F$ and $ \iota _* \mca G$ are rigid by \cite[Lemma 2.5]{BB13}. 
Each direct summand of $\mca G$ is an invertible sheaf on a connected subtree of $C_4 + \cdots + C_n$. 
Then the order introduced in \cite[Section 6.1]{IU} gives the order in Definition \ref{Order}. 
In particular $\mf {L}$ is totally ordered. 


To determine $\mf N$, similarly as before, take a filtration of $\mca F$ 
\begin{equation}
0 \subset \mca F_1 \subset \mca F _3 \subset \mca F_2 = \mca F \label{horoyoi}
\end{equation}
such that 
\begin{itemize}
\item $\mca F_1$ and $\mca F_3$ are pure on respectively $C_1$ and $C_1 + C_3$,
\item $\mca F_3/ \mca F_1$ is pure on $C_3$ and  
\item $\mca F_2/ \mca F_3$ is a pure sheaf on $C_2$. 
\end{itemize}

Similarly, $\mca F_3/ \mca F_1$ and $\mca F_2/ \mca F_3$ are rigid by Lemma \ref{extcomp} and \cite[Lemma 2.5]{BB13}. 
Since $\mca F_1$ is rigid and pure, there exists an integer $a_1$ such that $\mca F_1= \mca O_{C_1}(a_1)^{\oplus m_1} \oplus  \mca O_{C_1}(a_1+1)^{\oplus n_1}$. 
A similar statement applies to $\mca F_3 /\mca F_1$ amd $\mca F_2 / \mca F_3$. 
So there are three integers $\{ a_1, a_2 , a_3  \}$ such that every summand of $\mca F$ is one of the $18$ possibilities listed in Table \ref{Summand}. 
%
%
%
%
%
%
\begin{table}[htbp]
\begin{center}
\resizebox{1\hsize}{!}{
\begin{tabular}{|c|c|c|}
\hline
$\mca O_{C_1+C_3}(a_1+1,a_3)$	&$\mca O_{C_1+C_2+C_3}(a_1+1, a_2, a_3+1)$	&$\mca O_{C_1+C_2+C_3}(a_1+1, a_2+1, a_3+1)$		\\
\hline
$ \mca O_{C_1+C_3}(a_1+2 , a_3)$	&$\mca O_{C_1+C_2+C_3}(a_1+2, a_2, a_3 +1)$	&$\mca O_{C_1+C_2+C_3}(a_1+2, a_2+1, a_3+1)$	\\
\hline
$\mca O_{C_3}(a_3)$	&$\mca O_{C_2+C_3}(a_2, a_3+1)	$	&$\mca O_{C_2 +C_3}(a_2+1, a_3+1)$	\\
\hline
$ \mca O_{C_1+C_3}(a_1 +1, a_3+1)$	&$\mca O_{C_1+C_2+C_3}(a_1+1, a_2, a_3+2)$	&$\mca O_{C_1+C_2+C_3} (a_1 +1, a_2 +1, a_3+2)$	\\
\hline
$\mca O_{C_1+C_3}(a_1 +2, a_3+1)$	&$\mca O_{C_1+C_2+C_3}(a_1+2, a_2,a_3+2)$	&$\mca O_{C_1+C_2+C_3}(a_1+2, a_2+1, a_3+2)$	\\
\hline
$\mca O_{C_3}(a_3 +1)$	&$\mca O_{C_2+C_3}(a_2, a_3+2)$		&$\mca O_{C_2+C_3}(a_2+1, a_3+2)$\\
\hline
\end{tabular}
}
\end{center}
\caption{We denote by $\mca N_{ij}$ the $i$-th column and $j$-the row component in the table. For instance, $\mca N_{31}=\mca O_{C_3}(a_3)$. }\label{Summand}
\end{table}%

We prove the first assertion. 
Recall that $\Ext^1_{Z_{r}}(\mca L_j, \mca N_i) $ is isomorphic to $H^0(\mca O_{x})$ where $x \in C_3 \cap C_4$. 
Since the support of each $\mca N_i$ contains $C_3$, both $\mf N$ and $\mf L$ satisfy the conditions $(a)$ and $(b)$ in Definition \ref{Order}. 
If $\Hom(\mca O_{C_4}(d), \mca O_{C_4}(d'))$ is not zero, where $d$ and $d' \in \{ a_4, a_4+1  \}$, then there exists a morphism $\psi \colon \mca O_{C_4}(d) \to \mca O_{C_4}(d')$ which induces a non-zero morphism on $H^0(\mca O_x)$.  
Similarly, if $\Hom(\mca N_{i_1}, \mca N_{i_2})$ is not zero for $\mca N_{i_1}$ and $\mca N_{i_2}$ in $\mf N$, then there exists a morphism $\varphi  \colon \mca N_{i_1} \to \mca N_{i_2}$ which induces a non-zero morphism on $H^0(\mca O_x)$ since the point $x$ is not in $(C_1 \cup C_2 )\cap C_3$. 
Thus $\mf N$ and $\mf L$ are posets and this gives the proof of the first assertion.

To finish the proof of the second assertion $(2)$, 
let us denote by $\mca N_{ij}$ the $i$-th column and the $j$-th row component of Table \ref{Summand} and put $\mf T = \{ \mca N_{ij} \}_{i\in I, j\in J}$. 
Clearly $\mf N$ is a subposet of $\mf T$. 

We see that each column subposet $\{ \mca N_{ij} \}_{i=1}^6$ is totally ordered $\{ \mca N_{1j}\leq \cdots \leq \mca N_{6j}  \}$ $(j \in \{ 1,2,3 \})$ and each row subposet is also totally ordered $\{ \mca N_{i1} \leq \mca N_{i2} \leq \mca N_{i3}  \}$ $(i \in \{  1,2,3 \})$. 
However the poset $\mf T$ is not totally ordered. 
For instance the pair $(\mca N_{31}, \mca N_{22})$ satisfies $\mca N_{31} \not\leq \mca N_{22}$ and $\mca N_{22} \not\leq \mca N_{31}$ since $\Hom_{Z_r}(\mca N_{31}, \mca N_{22}) = \Hom_{Z_r}(\mca N_{22}, \mca N_{31})=0$.  
Thus, there are at most three minimal elements in any subposet of $\mf T$. 
In particular $\mf N$ has also at most three minimal elements.  
\end{proof}
\renewcommand{\proofname}{\textit{Proof}}

\begin{remark}
Let $Z$ be the fundamental cycle of the singularity $A_n$. 
Similarly as in the proof of Lemma \ref{minimal}, a pure sheaf $\mca E$ on $Z$ is obtained from an extension on pure sheaves $\mca F$ on $C_1 + \cdots + C_{n-1}$ and $\mca G$ on $C_n$. 
The sets of direct summands of $\mca F$ and $\mca G$ are not only posets but also totally ordered sets (see also \cite[Section 6.1]{IU}). 
This is the essential difference between the singularity $A_n$ and the other Kleinian singularities. 
\end{remark}

In the rest of this note we show that the inequality in Proposition \ref{mainD} is best possible by constructing an $\mca O_X$-rigid sheaf. 
The following lemma is necessary for the construction.

\begin{lemma}\label{rigidCY}
Let $\mca D$ be a $k$-linear triangulated category. 
Suppose that $\mca D$ is CY2. 
Let $\mca F$ and $\mca G$ be in the heart $\mca A$ of a $t$-structure on $\mca D$. 
Consider an extension class $[\mca E] \in \Hom^{1}_{\mca D}(\mca G, \mca F)$
\begin{equation}
\begin{CD}
0	@>>> \mca F	@>>>	\mca E	@>>>	\mca G	@>>>	0
\end{CD} \label{nemui}
\end{equation}
such that 
\[
\Hom_{\mca D}^0(\mca F, \mca G)=\Hom^{1}_{\mca D}(\mca F, \mca F) = \Hom_{\mca D}^{1}(\mca G, \mca G)=0. 
\]
Then the following are equivalent.

\begin{enumerate}
\item[$(a)$] $\mca E$ is rigid. 
\item[$(b)$] the vector space $\Hom^{1}_{\mca D}(\mca G, \mca F)$ is generated by $\epsilon ^{\mr{L}}\big(\mr{End}_{\mca D}(\mca G)\big)$ and $\epsilon^{\mr{R}}\big(\mr{End}_{\mca D}(\mca F)\big)$ where $\epsilon = [\mca E] \in \Hom^{1}_{\mca D}(\mca G, \mca F)$ and $\epsilon^{\mr L}$ (resp. $\epsilon^{\mr R}$) is the left  (resp. right) composition:
\begin{align*}
\epsilon^{\mr{R}} \colon \mr{End}_{\mca D}(\mca F) \to \Hom^{1}_{\mca D}(\mca G, \mca F) &,\ \epsilon^{\mr{R}} (f)= f \circ \epsilon \mbox{ and } \\
\epsilon^{\mr{L}} \colon \mr{End}_{\mca D}(\mca G) \to \Hom^{1}_{\mca D}(\mca G, \mca F) &,\ \epsilon^{\mr{L}} (g)= \epsilon \circ g 
\end{align*}
\end{enumerate}
\end{lemma}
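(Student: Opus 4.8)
The plan is to compute $\Hom^1_{\mca D}(\mca E,\mca E)$ by feeding the defining triangle of $\mca E$ through the relevant cohomological functors, exhibit it as the middle of a short exact sequence whose two outer terms are a cokernel and a kernel of ``composition with $\epsilon$'' maps, and then show that each of these two terms vanishes precisely when $(b)$ holds. First I would record the basic vanishings. Since $\mca F,\mca G$ lie in the heart $\mca A$, so does $\mca E$, whence $\Hom^p_{\mca D}(A,B)=0$ for $p<0$ (and, by CY2, for $p>2$) for $A,B\in\{\mca F,\mca G,\mca E\}$. The CY2 property converts the hypothesis $\Hom^0_{\mca D}(\mca F,\mca G)=0$ into $\Hom^2_{\mca D}(\mca G,\mca F)=0$, gives $\Hom^2_{\mca D}(\mca F,\mca F)\cong\Hom^0_{\mca D}(\mca F,\mca F)^{*}$, and supplies the perfect trace pairings
\[
\Hom^2_{\mca D}(\mca F,\mca F)\times\Hom^0_{\mca D}(\mca F,\mca F)\to k,\qquad \Hom^2_{\mca D}(\mca G,\mca G)\times\Hom^0_{\mca D}(\mca G,\mca G)\to k
\]
together with the perfect pairing $\Hom^1_{\mca D}(\mca F,\mca G)\times\Hom^1_{\mca D}(\mca G,\mca F)\to k$.

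Applying $\Hom_{\mca D}(\mca E,-)$ to $\mca F\xrightarrow{i}\mca E\xrightarrow{p}\mca G\xrightarrow{\epsilon}\mca F[1]$ and using $\Hom^1_{\mca D}(\mca F,\mca F)=0$ at the ends, the long exact sequence isolates $\Hom^1_{\mca D}(\mca E,\mca E)$ as the middle term of
\[
0\to\Coker\big(\epsilon\circ-\colon\Hom^0_{\mca D}(\mca E,\mca G)\to\Hom^1_{\mca D}(\mca E,\mca F)\big)\to\Hom^1_{\mca D}(\mca E,\mca E)\to\Ker\big(\epsilon\circ-\colon\Hom^1_{\mca D}(\mca E,\mca G)\to\Hom^2_{\mca D}(\mca E,\mca F)\big)\to0.
\]
So $\mca E$ is rigid if and only if this cokernel and this kernel both vanish.

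To evaluate the outer terms I would apply $\Hom_{\mca D}(-,\mca F)$ and $\Hom_{\mca D}(-,\mca G)$ to the same triangle. The vanishings yield $\Hom^0_{\mca D}(\mca E,\mca G)\cong\mr{End}_{\mca D}(\mca G)$ via $p^{*}$ and $\Hom^1_{\mca D}(\mca E,\mca F)\cong\Hom^1_{\mca D}(\mca G,\mca F)/\Im\,\epsilon^{\mr R}$ via $p^{*}$, and a direct chase shows that under these identifications the map $\epsilon\circ-$ sends $g\mapsto p^{*}(\epsilon^{\mr L}(g))$. Hence the cokernel is canonically $\Hom^1_{\mca D}(\mca G,\mca F)/(\Im\,\epsilon^{\mr L}+\Im\,\epsilon^{\mr R})$, whose vanishing is literally condition $(b)$. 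This already proves the implication $(a)\Rightarrow(b)$ and one half of $(b)\Rightarrow(a)$.

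The main obstacle is the kernel term, and here the CY2 pairings do the work. Via $i^{*}$ I would identify $\Hom^1_{\mca D}(\mca E,\mca G)$ with the kernel of precomposition with $\epsilon$ inside $\Hom^1_{\mca D}(\mca F,\mca G)$, and $\Hom^2_{\mca D}(\mca E,\mca F)\cong\Hom^2_{\mca D}(\mca F,\mca F)$, under which $\epsilon\circ-$ becomes postcomposition with $\epsilon$; thus the kernel term is $\{\beta\in\Hom^1_{\mca D}(\mca F,\mca G):\beta\circ\epsilon=0\text{ and }\epsilon\circ\beta=0\}$. The delicate step is checking that the trace pairings are compatible with one-sided composition by $\epsilon$: $\beta\circ\epsilon=0$ is equivalent to $\beta\perp\Im\,\epsilon^{\mr L}$ and $\epsilon\circ\beta=0$ to $\beta\perp\Im\,\epsilon^{\mr R}$. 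Consequently the kernel term equals the annihilator $(\Im\,\epsilon^{\mr L}+\Im\,\epsilon^{\mr R})^{\perp}$, i.e. the Serre dual of the cokernel term, so it too vanishes exactly when $(b)$ holds (and in fact $\dim\Hom^1_{\mca D}(\mca E,\mca E)=2\dim\big(\Hom^1_{\mca D}(\mca G,\mca F)/(\Im\,\epsilon^{\mr L}+\Im\,\epsilon^{\mr R})\big)$). Combining, both outer terms vanish iff $(b)$, giving $(a)\Leftrightarrow(b)$.
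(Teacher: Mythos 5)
Your proof is correct, but it reaches the conclusion by a genuinely different mechanism than the paper. Both arguments apply $\Hom_{\mca D}(-,\mca F)$, $\Hom_{\mca D}(-,\mca G)$ and $\Hom_{\mca D}(\mca E,-)$ to the defining triangle, and both ultimately identify condition $(b)$ with the vanishing of the cokernel of $\delta=\epsilon\circ(-)\colon\Hom^0_{\mca D}(\mca E,\mca G)\to\Hom^1_{\mca D}(\mca E,\mca F)$; that part of your argument coincides with the commutative diagram at the end of the paper's proof. The difference lies in how the other graded piece of $\Hom^1_{\mca D}(\mca E,\mca E)$, namely $\Ker\big(\epsilon\circ(-)\colon\Hom^1_{\mca D}(\mca E,\mca G)\to\Hom^2_{\mca D}(\mca E,\mca F)\big)$, gets killed. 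The paper never analyzes this kernel at all: it uses CY2 only numerically, showing that rigidity of $\mca E$ is equivalent to the single identity $\hom^0_{\mca D}(\mca E,\mca E)=\frac{1}{2}\chi(\mca E,\mca E)$ and that surjectivity of $\delta$ is equivalent to the same identity by Euler-characteristic bookkeeping in the long exact sequences, so that rigidity, surjectivity of $\delta$, and $(b)$ all coincide. You instead use CY2 structurally: the Serre pairing is compatible with one-sided composition by $\epsilon$ (this follows from naturality of the Serre duality isomorphism in each variable together with the trace symmetry $\mr{tr}_A(v\circ u)=\mr{tr}_B(Su\circ v)$ of Bondal--Kapranov), so the kernel term equals the annihilator $(\Im\epsilon^{\mr L}+\Im\epsilon^{\mr R})^{\perp}$ and is dual to the cokernel term. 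Your route is longer but buys a sharper statement, the dimension formula $\hom^1_{\mca D}(\mca E,\mca E)=2\dim\big(\Hom^1_{\mca D}(\mca G,\mca F)/(\Im\epsilon^{\mr L}+\Im\epsilon^{\mr R})\big)$, which the paper's count conceals. Two minor points: the three-term exact sequence exhibiting $\Hom^1_{\mca D}(\mca E,\mca E)$ between the cokernel and the kernel is automatic from the long exact sequence and does not require $\Hom^1_{\mca D}(\mca F,\mca F)=0$ ``at the ends'' (those vanishings are used only to identify the outer terms); and the trace-compatibility you flag as delicate is asserted rather than proved --- it is a standard property of any Serre functor, but you should note that it is exactly the input replacing the paper's Euler-characteristic computation.
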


\begin{proof}
Since $\mca F$ and $\mca G$ are rigid by the assumption, we have 
\[
\hom_{\mca D}^{0}(\mca F, \mca F) = \frac{1}{2}\chi(\mca F, \mca F) \mbox{ and }\hom_{\mca D}^{0}(\mca G, \mca G) = \frac{1}{2}\chi(\mca G, \mca G). 
\]
In particular, the following is obvious since $\mca E$ is in the heart $\mca A$: 
\begin{equation}
\mca E\mbox{ is rigid} \iff \hom_{\mca D}^{0}(\mca E, \mca E) = \frac{1}{2} \chi(\mca E, \mca E).  \label{katakori}
\end{equation}

By taking $\Hom_{\mca D}(-, \mca G)$ to the sequence (\ref{nemui}), we have $\Hom_{\mca D}(\mca G, \mca G) \stackrel{\sim}{\to} \Hom_{\mca D}(\mca E, \mca G)$ and hence 
\begin{equation}
\hom_{\mca D}^0(\mca E, \mca G) = \frac{1}{2}\chi(\mca G, \mca G). \label{ccd}
\end{equation} 
Similarly we have the following exact sequence:
\[
\xymatrix{
0\ar[r]	&	\Hom_{\mca D}^{0}(\mca G, \mca F)\ar[r]	&	\Hom_{\mca D}^{0}(\mca E, \mca F)\ar[r]	&	\Hom_{\mca D}^{0}(\mca F, \mca F)\ar[lld]\\
&	\Hom_{\mca D}^{1}(\mca G, \mca F)\ar[r]	&	\Hom_{\mca D}^{1}(\mca E, \mca F)\ar[r]	&	0.	\\
}
\]
We remark that $\hom_{\mca D}^{2}(\mca G, \mca F)=\hom_{\mca D}^0(\mca F, \mca G)=0$ since $\mca D$ is CY2. 
Hence the exact sequence gives the following equation 
\begin{equation}
d_0-d_1= \frac{1}{2}\chi (\mca F, \mca F) + \chi(\mca G, \mca F),  \label{noro}
\end{equation}
where $d_i=\hom_{\mca D}^{i}(\mca E, \mca F)$ for $i \in \{  0,1\}$. 

By taking $\bb R\Hom_{\mca D}^{i}(\mca E, -)$, we have the following exact sequence:
\[
\xymatrix{
0\ar[r]	&	\Hom_{\mca D}^{0}(\mca E, \mca F)\ar[r]		&\Hom_{\mca D}^{0}(\mca E, \mca E)\ar[r]	&	\Hom_{\mca D}^{0}(\mca E, \mca G)	\ar[r]^{\delta}	&	\Hom_{\mca D}^{1}(\mca E, \mca F). \\ 
}
\]

By computation of dimensions and (\ref{ccd}), we see that the surjectivity of $\delta $ is equivalent to the following 
\begin{equation}
 \hom_{\mca D}^0(\mca E, \mca E)= d_0-d_1 + \frac{1}{2}\chi(\mca G, \mca G). \label{casio}
\end{equation}
By (\ref{noro}) the equation (\ref{casio}) is equivalent to the following:
\begin{eqnarray*}
 \hom_{\mca D}^0(\mca E, \mca E) &=& \frac{1}{2}\chi(\mca F, \mca F) + \chi(\mca G, \mca F)+\frac{1}{2}\chi(\mca G, \mca G) \\
&=& \frac{1}{2} \chi(\mca E, \mca E). 
\end{eqnarray*}
Hence $\mca E$ is rigid if and only if the morphism $\delta $ is surjective by (\ref{katakori}). 
Furthermore, the subjectivity of $\delta$ can be understood by the following diagram of exact sequences:
\[
\xymatrix{
0 \ar[r]	&	\Hom_{\mca D}^0(\mca G, \mca G) \ar[r]^{\cong}\ar[d]^{\epsilon^{\mr L}}	&	\Hom_{\mca D}^0(\mca E , \mca G) \ar[r]\ar[d]^{\delta}	&	0	\\
\Hom_{\mca D}^0(\mca F, \mca F)\ar[r]^{\epsilon^{\mr R}}	&	\Hom_{\mca D}^1(\mca G, \mca F)	\ar[r]^{\pi}	&	\Hom_{\mca D}^1(\mca E, \mca F)	\ar[r]&	0
}
\]
Since $\pi$ is surjective, 
$\delta$ is surjective if and only if $\epsilon^{ \mr{L}}$ is surjective up to the image of $\epsilon^{\mr{R}}$. 
\end{proof}

\begin{proposition}\label{bestD}
Let $Z_{4,r}$ be the reduced scheme of the fundamental cycle of $D_4$ and let $Z_r$ be the reduced scheme of the fundamental cycle of the singularity $D_n$. 
\begin{enumerate}
\item[$(1)$] There exists a rank $3$ indecomposable pure sheaf on $Z_{4,r}$ which is $\mca O_X$-rigid. 
\item[$(2)$] The inequality in Proposition \ref{mainD} is best possible. Namely there exists an $\mca O_X$-rigid pure sheaf $\mca E$ on $Z_r$ with $\rank_{Z_r}\mca E=3$. 
\end{enumerate}
\end{proposition}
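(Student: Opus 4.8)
The plan is to deduce part $(2)$ from part $(1)$ and to spend the real effort on an explicit construction on $Z_{4,r}$. For the reduction, observe that $Z_{4,r}=C_1+C_2+C_3+C_4$ sits inside the reduced cycle $Z_r$ of $D_n$ as the $D_4$-configuration around the trivalent vertex $C_3$, and the inclusion $Z_{4,r}\hookrightarrow Z_r$ is a closed embedding. If $\mca E$ is the sheaf produced in $(1)$, I would push it forward to $Z_r$: purity, indecomposability, and the value of $\rank_{Z_r}$ are preserved because the push-forward along a closed embedding of curves is fully faithful with image closed under summands (exactly as in the proof of Corollary \ref{maincor}), and $c_1$ still attains its maximal coefficient $3$ on $C_3$. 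The one genuinely new point is $\mca O_X$-rigidity over the resolution $X$ of $D_n$. Since $\mca E$ is supported on $C_1+\cdots+C_4$ and restricts to a line bundle on $C_4$, the local sheaves $\mca Ext^q_X(\iota_*\mca E,\iota_*\mca E)$ agree with those computed over the resolution of $D_4$ on a neighbourhood of the support and vanish near the new node $C_4\cap C_5$; hence the local-to-global spectral sequence returns the same $\Ext^1_X$, and $(2)$ follows once $(1)$ is proved.

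For $(1)$ I would realise the rank $3$ sheaf by a short iterated extension and certify rigidity with Lemma \ref{rigidCY}. Numerically the target is a pure indecomposable $\mca E$ with $c_1(\iota_*\mca E)=C_1+C_2+3C_3+C_4$; this class has self-intersection $-6$, so rigidity forces $\hom^0_{\mca D}(\mca E,\mca E)=\frac{1}{2}\chi(\mca E,\mca E)=3$, and the sought sheaf is thus a \emph{non-spherical} rigid object with $\mr{End}(\mca E)$ local of dimension $3$. I would first build a rank $2$ rigid indecomposable $\mca F$ with $c_1(\iota_*\mca F)=C_1+C_2+2C_3+C_4$, the fundamental-cycle class, which is a $(-2)$-class so that $\mca F$ is spherical; concretely $\mca F$ is the universal extension of two invertible sheaves on connected subtrees (for instance on $C_1+C_3+C_4$ and on $C_2+C_3$), its rigidity checked by a first application of the same lemma. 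Then I would take $\mca G=\mca O_{C_3}(d)$ for suitable $d$ and form a non-split extension
\[
0\to \mca F\to \mca E\to \mca G\to 0 .
\]

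To apply Lemma \ref{rigidCY} to this triangle I must verify its hypotheses and condition $(b)$. The vanishings $\hom^1(\mca F,\mca F)=\hom^1(\mca G,\mca G)=0$ hold because $\mca F$ is rigid and $\mca O_{C_3}(d)$ is spherical, while $\hom^0(\mca F,\mca G)=\Hom(\mca F,\mca O_{C_3}(d))=0$ is arranged by taking $d$ below the splitting type of $\mca F|_{C_3}$. Since $\mr{End}(\mca F)=\mr{End}(\mca G)=k$, the images of $\epsilon^{\mr L}$ and $\epsilon^{\mr R}$ both equal $k\epsilon$, so condition $(b)$ collapses to the single equality $\hom^1(\mca G,\mca F)=1$. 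I would read this off from $\chi(\mca G,\mca F)=-C_3\cdot(C_1+C_2+2C_3+C_4)=1$ together with $\hom^2(\mca G,\mca F)=\hom^0(\mca F,\mca G)=0$ and $\hom^0(\mca G,\mca F)=2$, which give $\hom^1(\mca G,\mca F)=2+0-1=1$. Lemma \ref{rigidCY} then yields rigidity of $\mca E$, and the dimension count above records $\rank_{Z_r}\mca E=3$.

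The main obstacle is the interplay between indecomposability and the numerics, not rigidity itself. Once condition $(b)$ holds, rigidity is automatic, so the work is to show that $\mca E$ does not split, that is, that the $3$-dimensional algebra $\mr{End}(\mca E)$ is local $k\oplus\mf m$ with $\mf m$ nilpotent rather than a product of smaller rings; this must be extracted from the connecting maps in the long exact sequences used in Lemma \ref{rigidCY}, and it is precisely here that one must rule out the competing decomposition of $\mca E$ into the lower-rank rigid summands that the poset analysis of Proposition \ref{mainD} would otherwise allow. A closely related secondary check is that $\hom^0(\mca G,\mca F)$ equals exactly $2$, and not more, for the chosen $d$: an extra section would raise $\hom^1(\mca G,\mca F)$ above $1$ and destroy condition $(b)$, so the admissible range for $d$ and for the subtree degrees entering $\mca F$ is narrow. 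Pinning down these Hom-dimensions, and thereby the local structure of $\mr{End}(\mca E)$, is the computational heart of Proposition \ref{bestD}.
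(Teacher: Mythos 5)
Your overall skeleton agrees with the paper's: part $(2)$ is deduced from part $(1)$ by pushing forward along the closed embedding $Z_{4,r}\hookrightarrow Z_r$, and rigidity in part $(1)$ is certified by Lemma \ref{rigidCY}. Your reformulation of condition $(b)$ is also correct: when $\mr{End}(\mca F)=\mr{End}(\mca G)=k$ it collapses to ``$\hom^1(\mca G,\mca F)=1$ and $\epsilon\neq0$''. Moreover, the issue you single out as the hard part --- indecomposability --- is in fact the easy part: for a non-split extension $0\to\mca F\to\mca E\to\mca G\to0$ with $\mr{End}(\mca F)=\mr{End}(\mca G)=k$ and $\Hom(\mca F,\mca G)=0$, any $\phi\in\mr{End}(\mca E)$ preserves $\mca F$ (since $\Hom(\mca F,\mca G)=0$) and induces scalars $a$ on $\mca F$ and $b$ on $\mca G$; if $a\neq b$ then $\phi-b\,\mr{id}$ factors as $i\circ\psi$ with $\psi\circ i=(a-b)\mr{id}_{\mca F}$, splitting the sequence, a contradiction; and the ideal $\{\phi: a=b=0\}=i\circ\Hom(\mca G,\mca F)\circ p$ squares to zero. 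So $\mr{End}(\mca E)$ is local with no case analysis needed.

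The genuine flaw is elsewhere: the sheaf you are trying to build does not exist, so no amount of work on the two flagged computations can close the argument. By the paper's own normal form (proof of Proposition \ref{mainD}), an indecomposable $\mca O_X$-rigid pure sheaf $\mca E$ on $Z_{4,r}$ sits in $0\to\mca F'\to\mca E\to\mca G'\to0$ with $\mca G'$ a sum of line bundles on $C_4$ and $\mca F'$ a sum of line bundles on subtrees of $C_1+C_2+C_3$ all containing $C_3$, both rigid. With your target $c_1(\iota_*\mca E)=C_1+C_2+3C_3+C_4$, the summand set of $\mca F'$ must be $\{\mca O_{C_1+C_3},\mca O_{C_2+C_3},\mca O_{C_3}\}$ or $\{\mca O_{C_1+C_2+C_3},\mca O_{C_3},\mca O_{C_3}\}$, with degrees pinned down by pairwise $\Ext^1_X$-vanishing. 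In every such configuration there is a morphism from an $\mca O_{C_3}$-summand into another summand (or between the two isomorphic $\mca O_{C_3}$-summands) which is an isomorphism near the node $x=C_3\cap C_4$, hence acts \emph{nontrivially} on $\Ext^1(\mca O_{C_4}(d),-)\cong H^0\big(k(x)\big)$; row-reducing the extension class by the corresponding automorphism of $\mca F'$ splits off a direct summand. So every rigid pure sheaf with your Chern class is decomposable. This is exactly why the paper's example carries $c_1=2C_1+2C_2+3C_3+C_4$: the extra multiplicity on $C_1,C_2$ is what allows three subtree bundles $\mca N_{41},\mca N_{32},\mca N_{23}$ whose mutual morphisms all vanish at $x$ (the Lemma \ref{ferox} phenomenon), blocking the row reduction.

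Concretely, your chain of reasoning breaks at the words ``form a non-split extension''. You computed $\hom^1(\mca G,\mca F)=1$ in $D_Z(X)$, i.e.\ $\Ext^1_X(\iota_*\mca G,\iota_*\mca F)=k$, and tacitly assumed this class is realized by an extension of \emph{sheaves on $Z_{4,r}$}. But $\Ext^1_{Z_{4,r}}(\mca G,\mca F)\to\Ext^1_X(\iota_*\mca G,\iota_*\mca F)$ is only injective (Lemma \ref{extcomp}); it is an isomorphism in the situation of Corollary \ref{isom}, where sub and quotient live on \emph{distinct} components, and here $\mr{Supp}\,\mca G=C_3$ lies inside $\mr{Supp}\,\mca F$. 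For instance, realizing your spherical $\mca F$ as the universal extension of $\mca O_{C_4}$ by $\mca O_{C_1+C_3}(\ast,a_3+1)\oplus\mca O_{C_2+C_3}(\ast,a_3+1)$, one finds $\Hom(\mca F,\mca O_{C_3}(d))=0$ and $\hom^0(\mca O_{C_3}(d),\mca F)=2$ force $d=a_3$, whereas a local-to-global computation on the nodal curve (using that $\Ext^2_{Z_{4,r}}$ does not vanish and that the Yoneda product of the two node classes at $x$ is nonzero) gives $\Ext^1_{Z_{4,r}}(\mca O_{C_3}(d),\mca F)\cong H^1(\mca O_{C_3}(a_3-d))^{\oplus2}$, which is zero for all $d\leq a_3+1$. (For $d\geq a_3+2$ curve-level extensions do exist, but then $\Hom(\mca F,\mca G)\neq0$ and the hypotheses of Lemma \ref{rigidCY} fail.) The unique non-split extension over $X$ is a sheaf with genuine multiplicity-two structure along $C_3$, i.e.\ a module on the non-reduced fundamental cycle, not on $Z_{4,r}$; it therefore cannot prove Proposition \ref{bestD}(1), which is a statement about the reduced scheme.
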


\begin{proof}
We first prove the assertion (1). 
Let $\iota  \colon Z_{4,r} \to X$ be the embedding to the minimal resolution of the singularity. 
Take three pure sheaves $\mca N_{41}=\mca O_{C_1+C_3}(a_1+1, a_3+1), \mca N_{32}=\mca O_{C_2+C_3}(a_2, a_3+1)$ and $\mca N_{23} =\mca O_{C_1+C_2+C_3}(a_1+2, a_2+1, a_3 +1)$ from Table \ref{Summand} and put $\mca N =\mca N_{41}\+ \mca N_{32}\+\mca N_{23}$. 
Consider the universal extension $[\mca U] \in \Ext^1_{Z_{4,r}}(\mca O_{C_4}, \mca N)$:
\[
\begin{CD}
0 @>>> \mca N @>>> \mca U @>>> \mca O_{C_4} @>>> 0
\end{CD}. 
\]
We wish to prove that $\mca U$ is indecomposable and $\mca O_X$-rigid. 

The proof of indecomposability is essentially the same as in the proof of Theorem \ref{mainthm1}. 
It is easily see 
\begin{align}
\Hom_{Z_{4,r}}(\mca N_{41}, \mca N_{32}) &= \Hom _{Z_{4,r}}(\mca N_{32}, \mca N_{41})=0,   \label{ortho}\\ 
\Hom_{Z_{4,r}}(\mca N_{41}, \mca N_{23}) &\cong H^0(\mca O_{C_1+C_3} (1,-1)) \cong k \mbox{ and } \label{right41}	\\
\Hom_{Z_{4,r}}(\mca N_{32}, \mca N_{23}) &\cong H^0(\mca O_{C_2+C_3} (1,-1)) \cong k. 	\label{right32}
\end{align}
Take non-zero morphisms $\varphi $ and $\varphi'$ respectively in $\Hom_{Z_{4,r}}(\mca N_{41}, \mca N_{23})$ and $\Hom_{Z_{4,r}}(\mca N_{32}, \mca N_{23})$. 
Both section $\varphi$ and $\varphi'$ are zero on $C_3$ by (\ref{right41}) and (\ref{right32}). 
Moreover, since $\mca N_{41}$ is left and right orthogonal to $\mca N_{32}$ by (\ref{ortho}), the same argument in the proof of Theorem \ref{mainthm1} shows that $\mca U$ is indecomposable.

The rigidity of $\iota _*\mca U$ is a consequence of Lemma \ref{rigidCY}. 
We first show that $\mca N$ and $\mca O_{C_4}$ satisfy the assumption in Lemma \ref{rigidCY}. 
It is enough to show that $\iota _*\mca N$ is rigid.

Let us denote by $\mca D$ the derived category $D_Z(X)$. 
The rigidity of $\iota_* \mca N$ essentially follows from Riemann-Roch theorem. 
In fact, by Riemann-Roch theorem, we easily see 
\begin{align}
\chi( \iota _* \mca N_{41}, \iota _* \mca N_{32})	& =0  \label{zero} \\
\chi( \iota _* \mca N_{41}, \iota _* \mca N_{23})	& =\chi( \iota _* \mca N_{32}, \iota _* \mca N_{32})=1. \label{one}
\end{align}
By (\ref{ortho}) and (\ref{zero}) we have $\Hom_{\mca D}^1(\iota_* \mca N_{41}, \iota _* \mca N_{32})=0$. 
It is easy to see 
\begin{align}
\Hom_{Z_{4,r}}(\mca N_{23}, \mca N_{41}) &\cong H^0(\mca O_{C_1+C_3} (-1,0))=0  \mbox{ and } \label{lorth41} \\
\Hom_{Z_{4,r}}(\mca N_{23}, \mca N_{32}) &\cong H^0(\mca O_{C_2+C_3} (-1,0))=0.  \label{lorth32}
\end{align}
Hence we have $\Hom_{\mca D}^1(\iota_* \mca N_{23}, \iota _* \mca N_{41})=\Hom_{\mca D}^1(\iota_* \mca N_{23}, \iota _* \mca N_{32})=0$ by (\ref{right41}), (\ref{right32}), (\ref{one}), (\ref{lorth41}) and (\ref{lorth32}). 
Thus we see that $\iota _*\mca N$ is rigid.

By Corollary \ref{isom} the push forward $\iota _* \mca U$ is also a universal extension. 
Since the projections $p_{ij} \colon \iota _* \mca N \to \iota _* \mca N_{ij}$ to the direct summand of $\iota _* \mca N$ give a basis $\{ [ \iota _* \mca U]^{\mr{L}} (p_{ij}) \}$ of $\Hom^1_X(\iota _* \mca O_{C_4}, \iota _* \mca N)$, the push forward $\iota _*\mca U$ is rigid by Lemma \ref{rigidCY}.

For the second assertion $(2)$, let us denote by $j \colon Z_{4,r} \to Z_{r}$ the closed embedding. 
Then the push forward $j_* \mca U$ is a pure sheaf on $Z_r$ and is $\mca O_X$ rigid by Lemma \ref{extcomp}. 
Thus the second assertion holds. 
\end{proof}

\section{$\mca O_X$-rigid pure sheaves on $E_{6,7,8}$}\label{4}

\begin{proposition}\label{propE}
Let $Z$ be the fundamental cycle of the singularity $E_{n}$ for $n \in \{6,7,8  \}$ and $Z_{r}$ is the reduced scheme of $Z$. 
Then the maximal rank of $\mca O_X$-rigid  indecomposable pure sheaves is $3$:
\[
\max \{ \rank_{Z_r} \mca E   \mid  \mca E \mbox{ is an $\mca O_X$-rigid indecomposable pure sheaf } \} =3.
\]
\end{proposition}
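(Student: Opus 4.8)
The plan is to prove the stated equality by establishing the two inequalities separately. The inequality $\geq 3$ is a short consequence of the $D_4$ case together with Remark \ref{change index}, while the upper bound $\leq 3$ is obtained by adapting the proof of Proposition \ref{mainD} to the $E_n$ trees; the quantitative input ``three'' will come, exactly as for $D_n$, from the single trivalent component of $Z_r$.

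For the inequality $\geq 3$ I would use that, by Remark \ref{change index}, the subchain $C_2+C_3+C_4+C_5$ of $Z_r$ is the reduced fundamental cycle $Z_{4,r}$ of $D_4$ and is a closed subscheme of $Z_r$. Let $\mca U$ be the rank $3$ indecomposable $\mca O_X$-rigid pure sheaf on $Z_{4,r}$ produced in Proposition \ref{bestD}$(1)$, and let $j\colon Z_{4,r}\to Z_r$ be the closed embedding. Then $j_*\mca U$ is a pure sheaf on $Z_r$, it is indecomposable because $j_*$ is fully faithful with image closed under direct summands (as in Corollary \ref{maincor}), and its rank is still $3$ since $j_*\mca U$ is supported on $Z_{4,r}$ with rank $3$ on the central component. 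Finally $j_*\mca U$ is $\mca O_X$-rigid: the embedding $Z_{4,r}\hookrightarrow X$ factors through $j$, so the pushforward of $j_*\mca U$ to $X$ is the already-rigid object of Proposition \ref{bestD}$(1)$ (one may also invoke Lemma \ref{extcomp}). This produces the required rank $3$ example.

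For the upper bound, let $\mca E$ be an indecomposable $\mca O_X$-rigid pure sheaf on $Z_r$. I would cut the tree at the trivalent vertex: choosing two of the three legs together with the centre gives a connected chain $Z'$, and the third leg is a second chain $Z''$, the two meeting in the single node where the third leg is attached to the centre. Taking the saturation as in Proposition \ref{mainD}, $\mca E$ fits into a short exact sequence $0\to\mca F\to\mca E\to\mca G\to 0$ with $\mca F$ pure on $Z'$ and $\mca G$ pure on $Z''$; since $Z'$ and $Z''$ are chains, Theorem \ref{IUthm} decomposes $\mca F=\bigoplus_i\mca N_i$ and $\mca G=\bigoplus_j\mca L_j$ into invertible sheaves on connected subchains, and since $\Hom_{Z_r}(\mca F,\mca G)=0$ both $\iota_*\mca F$ and $\iota_*\mca G$ are rigid by \cite[Lemma 2.5]{BB13}. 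The connecting groups $\Ext^1_{Z_r}(\mca L_j,\mca N_i)$ are one-dimensional and supported at the single node (Corollary \ref{isom}), so Definition \ref{Order} and Proposition \ref{Order2} equip $\mf N=\{\mca N_i\}$ and $\mf L=\{\mca L_j\}$ with poset structures: $\mf L$ is totally ordered because $Z''$ is a chain (the order of \cite[Section 6.1]{IU}), while $\mf N$ has \emph{at most three} minimal elements because within the chain $Z'$ the centre has exactly two neighbours, so a summand $\mca N_i$ can approach the cutting node from one of only two directions or not at all, and summands approaching from the same direction are comparable. Writing $[\mca E]$ as a matrix over these one-dimensional groups and running the row/column reduction of Proposition \ref{mainD}, the totally ordered $\mf L$ leaves at most one nonzero entry in each column and the three minimal elements of $\mf N$ at most three in each row; since $Z_r$ is reduced, indecomposability then yields $\rank_{Z_r}\mca E\leq 3$.

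The main obstacle is the verification that $\mf N$ has at most three minimal elements when the legs are long, that is, the analogue of Lemma \ref{minimal} and Table \ref{Summand} for $E_7$ and $E_8$. Here the chain $Z'$ is longer than the $A_3$ occurring for $D_n$, so the list of rigid summands $\mca N_i$ is larger than the eighteen entries of Table \ref{Summand}; the point to check is that this enlargement only lengthens the totally ordered ``direction chains'' inside $\mf N$ and creates no new minimal elements, so that the count stays bounded by the local trivalent structure of the centre and is insensitive to the leg lengths. Once this is in place the matrix reduction is exactly as in the $D_n$ case, and combining the two inequalities gives the asserted maximum $3$.
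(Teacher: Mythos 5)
Your overall strategy is the same as the paper's: the lower bound is obtained exactly as in the paper's proof (push forward the rank $3$ sheaf $\mca U$ of Proposition \ref{bestD}(1) along the embedding $C_2+C_3+C_4+C_5\hookrightarrow Z_r$ of Remark \ref{change index}; rigidity is immediate since the embeddings into $X$ compose), and that half of your argument is complete. The upper bound also follows the paper's route: cut at the trivalent component, saturate, decompose both pieces by Theorem \ref{IUthm}, and run the poset/matrix reduction of Proposition \ref{mainD}. The problem is that the step you defer as ``the main obstacle'' --- that $\mf N$ has at most three minimal elements --- is the \emph{entire} quantitative content of the bound, and the heuristic you offer in its place is false.

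Concretely, it is not true that summands whose supports reach the cutting node ``from the same direction'' are comparable in the order of Definition \ref{Order}. Already in the paper's $D_4$ table (Table \ref{Summand}) take $\mca N_{22}=\mca O_{C_1+C_2+C_3}(a_1+2,a_2,a_3+1)$ and $\mca N_{13}=\mca O_{C_1+C_2+C_3}(a_1+1,a_2+1,a_3+1)$: both have full support, so any direction-based grouping places them in the same class; yet $\Hom_{Z_r}(\mca N_{13},\mca N_{22})\cong H^0(\mca O_{C_1+C_2+C_3}(1,-1,0))$ and $\Hom_{Z_r}(\mca N_{22},\mca N_{13})\cong H^0(\mca O_{C_1+C_2+C_3}(-1,1,0))$, and every such section vanishes on the degree $-1$ component, hence at its node with $C_3$, hence on all of $C_3$ (degree $0$ there), so every morphism in either direction induces the zero map on $\mca Ext^1(\mca O_{C_4},-)\cong k(x)$, $x\in C_3\cap C_4$. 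These two sheaves are incomparable, so ``direction chains'' do not cover the poset. What actually gives the bound in the paper is different: first, the component-by-component filtration (\ref{horoyoi}) together with rigidity forces each graded piece to be $\mca O_{C_i}(a_i)^{\oplus m_i}\+\mca O_{C_i}(a_i+1)^{\oplus n_i}$, so the candidate summands form a \emph{finite} table (Table \ref{Summand}, resp.\ Table \ref{F_4}); second, that table is covered by its three columns, each totally ordered, where the columns are indexed not by direction but by the \emph{degree on one chosen leg} (not containing it / containing it with degree $a$ / with degree $a+1$). Your sketch omits both ingredients. Finally, your worry about leg lengths for $E_7$ and $E_8$ is a red herring: take $Z''$ to be the long leg $C_5+\cdots+C_n$, whose summand poset is totally ordered by \cite[Section 6.1]{IU} regardless of its length, and $Z'=C_1+C_2+C_3+C_4$; then $Z'$ is the same $A_4$ chain for all of $E_6,E_7,E_8$, and exactly one finite table --- the paper's Table \ref{F_4} --- needs to be checked.
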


\begin{proof}
The proof is essentially the same as in Proposition \ref{mainD}. 
Let $\mca F$ be an $\mca O_X$-rigid pure sheaf on $Z_r$. 
By the same argument in the proof of Proposition \ref{mainD}, there is a filtration of $\mca F$
\[
0 =\mca F_{0} \subset \mca F_1 \subset \mca F_2 \subset \mca F_3 \subset \mca F_4 \subset \mca F_{5} = \mca F
\]
such that 
\begin{itemize}
\item $\mca F_i/\mca F_{i-1}$ is a pure sheaf on $C_i$ for $i \in \{ 1,2,3,4  \}$ and 
\item $\mca F_5/ \mca F_4$ is a pure sheaf on $C_5 + \cdots + C_n$
\end{itemize}

The quotient $\mca F_i / \mca F_{i-1}$ is also $\mca O_X$-rigid since $\mca F$ is $\mca O_X$-rigid. 
Hence for each $i \in \{ 1,2,3,4  \}$ we have 
\[
\mca F_i/ \mca F_{i-1} \cong \mca O_{C_i}(a_i)^{\+m_i} \+ \mca O_{C_i}(a_i+1)^{\+n_i}. 
\]
Moreover we can assume that the support of each $\mca F_i$ contains $C_i$. 
Hence each direct summand of $\mca F_3$ is one of the Table \ref{F_3} and each direct summand of $\mca F_4$ supported on $C_4$ is one of Table \ref{F_4}.

\begin{table}[hbtp]
\resizebox{1\hsize}{!}{
\begin{tabular}{|c|c|c|}
\hline
$\mca O_{C_1+C2}(a_1-1, a_2)$	&$\mca O_{C_1+C_2+C_3}(a_1-1, a_2+1, a_3)$	&$\mca O_{C_1+C_2+C_3}(a_1-1, a_2+1, a_3+1)$	\\
\hline
$\mca O_{C_1+C2}(a_1, a_2)$	&$\mca O_{C_1+C_2+C_3}(a_1, a_2+1, a_3)$	&$\mca O_{C_1+C_2+C_3}(a_1, a_2+1, a_3+1)$\\
\hline
$\mca O_{C_2}(a_2)$	&$\mca O_{C_2 +C_3}(a_2+1, a_3)$				&$\mca O_{C_2+C_3}(a_2+1, a_3+1)	$\\
\hline
$\mca O_{C_1+C2}(a_1-1, a_2+1)$	&$\mca O_{C_1+C_2+C_3} (a_1 -1, a_2 +2, a_3)$&$\mca O_{C_1+C_2+C_3}(a_1 -1, a_2 +2, a_3+1)$	\\
\hline
$\mca O_{C_1+C2}(a_1, a_2+1)$	&$\mca O_{C_1+C_2+C_3}(a_1, a_2+2, a_3)$&$\mca O_{C_1+C_2+C_3}(a_1, a_2+2, a_3+1)$	\\
\hline
$\mca O_{C_2}(a_2)$	&$\mca O_{C_2+C_3}(a_2+2, a_3)$				&$\mca O_{C_2+C_3}(a_2+2, a_3+1)$		\\
\hline
\end{tabular}
}
\vspace{1mm}

\caption{}\label{F_3}
%
\resizebox{1\hsize}{!}{
\begin{tabular}{|c|c|c|}
\hline
$\mca O(a_1-1, a_2+1, a_3)$	&	$\mca O(a_1-1, a_2+1, a_3+1, a_4)$	&$\mca O(a_1-1, a_2+1, a_3+1, a_4+1)$	\\
\hline
$\mca O(a_1, a_2+1, a_3)$	&	$\mca O(a_1, a_2+1, a_3+1, a_4)$&	$\mca O(a_1, a_2+1, a_3+1, a_4+1)$\\
\hline
$\mca O(a_2+1, a_3)$				&	$\mca O(a_2+1, a_3+1,a_4)	$&	$\mca O(a_2+1, a_3+1,a_4+1)$\\
\hline
$\mca O (a_1 -1, a_2 +2, a_3)$	&	$\mca O (a_1 -1, a_2 +2, a_3+1,a_4)$&	$\mca O (a_1 -1, a_2 +2, a_3+1,a_4+1)$	\\
\hline
$\mca O(a_1, a_2+2, a_3)$	&	$\mca O(a_1, a_2+2, a_3+1,a_4)$	&	$\mca O(a_1, a_2+2, a_3+1,a_4+1)$	\\
\hline
$\mca O(a_2+2, a_3)$			&	$\mca O(a_2+2, a_3+1,a_4)$&	$\mca O(a_2+2, a_3+1,a_4+1)$		\\
\hline
$\mca O(a_3)$		&	$\mca O(a_3+1,a_4)$	&	$\mca O(a_3+1,a_4+1)$\\
\hline
$\mca O(a_1-1, a_2+1, a_3+1)$	& $\mca O(a_1-1, a_2+1, a_3+2,a_4)$& $\mca O(a_1-1, a_2+1, a_3+2,a_4+11)$\\
\hline
$\mca O(a_1, a_2+1, a_3+1)$	&	$\mca O(a_1, a_2+1, a_3+2,a_4)$&	$\mca O(a_1, a_2+1, a_3+2,a_4+1)$\\
\hline
$\mca O(a_2+1, a_3+1)$	&	$\mca O(a_2+1, a_3+2,a_4)$ &	$\mca O(a_2+1, a_3+2,a_4+1)$\\
\hline
$\mca O(a_1 -1, a_2 +2, a_3+1)$	&	$\mca O(a_1 -1, a_2 +2, a_3+2, a_4)$&	$\mca O(a_1 -1, a_2 +2, a_3+2, a_4+1)$\\
\hline
$\mca O(a_1, a_2+2, a_3+1)$	&	$\mca O(a_1, a_2+2, a_3+2,a_4)$ &	$\mathcal O(a_1, a_2+2, a_3+2,a_4+1)$\\
\hline
$\mca O(a_2+2, a_3+1)$	&	$\mca O(a_2+2, a_3+2,a_4)$ &	$\mca O(a_2+2, a_3+2,a_4+1)$\\
\hline
$\mca O(a_3+1)$	&	$\mca O(a_3+2, a_4)$&	$\mca O(a_3+2, a_4+1)$\\
\hline
\end{tabular}
}
\vspace{1mm}

\caption{For simplicity we denote by $\mca O(b_1, b_2, b_3, b_4)$ an invertible sheaf on $C_1+ C_2 +C_3 +C_4$ whose degrees are respectively $b_i$ on $C_i$. }\label{F_4}
\end{table}

Since $\mca F_5$ is pure sheaf on the tree which is isomorphic to the fundamental cycle of $A_{n-4}$, the set of direct summands of $\mca F_5/\mca F_4$ is totally ordered with respect to Definition \ref{Order}. 
Let $\mf T$ be the set of sheaves in Table \ref{F_4}. 
Similarly as in Proposition \ref{mainD} the set $\mf T$ is a poset. 
Furthermore each column set $\{  \mca N_{ij}  \}_{j=1}^3$ and each row set $\{ \mca N_{ij}  \}_{i=1}^{14}$ are totally ordered. 
Hence any subposet of $\mf T$ has at most $3$ minimal elements. 
Thus the maximum of the rank of indecomposable pure sheaves on $Z_r$ is at most $3$. 

Let $\mca U$ be a pure sheaf constructed in the proof of Proposition \ref{bestD} and $\iota \colon Z' \to Z_r$ be the closed embedding where $Z' = \sum_{j=2}^{5}C_j$. 
Then the push forward $\iota _* \mca U$ gives an indecomposable pure sheaf on $Z_r$ after the change of indexes $(1,2,3,4) \mapsto (2,4,3,5)$ .
The sheaf $\iota _* \mca U$ is $\mca O_X$-rigid by Lemma \ref{extcomp}. 
Thus the opposite inequality holds.  
\end{proof}

\renewcommand{\proofname}{\textit{Proof of Theorem \ref{mainthm2}}}
\begin{proof}
If the singularity is $D_n$ then the inequality holds and best possible by Propositions \ref{mainD} and \ref{bestD}. 
The case of $E_{6},E_{7} $ or $E_{8}$ follows form Proposition \ref{propE}. 
\end{proof}
\renewcommand{\proofname}{\textit{Proof}}

\end{document}